\renewcommand{\le}{\leqslant}
\renewcommand{\ge}{\geqslant}
\newcommand{\R}{\mathbb R}
\renewcommand{\P}{\mathbb P}
\newcommand{\E}{\mathbb E}
\newcommand{\Z}{\mathbb Z}
\newcommand{\N}{\mathbb N}
\newcommand{\F}{\mathcal F}
\newcommand{\J}{\mathcal J}
\newcommand{\A}{\mathcal A}
\newcommand{\G}{\mathcal G}
\newcommand{\C}{\mathcal C}
\newcommand{\B}{\mathcal B}
\newcommand{\K}{\mathcal K}
\newcommand{\T}{\mathcal T}
\renewcommand{\S}{\mathcal S}
\newtheorem{conj}{Conjecture}
\newtheorem{theorem}{Theorem}[section]
\newtheorem{lemma}[theorem]{Lemma}
\newtheorem{cor}[theorem]{Corollary}
\newtheorem{prop}[theorem]{Proposition}
\newtheorem{obs}[theorem]{Observation}
\theoremstyle{definition}
\theoremstyle{remark}
\theoremstyle{definition}
\newtheorem{example}{Example}[section]
\begin{document}

\title{On the size of maximal intersecting families}
\author{Dmitrii Zakharov \thanks{Department of Mathematics, Massachusetts Institute of Technology, Cambridge, MA 02139, USA, Email: {\tt zakhdm@mit.edu}.}}
\date{}

\maketitle

\begin{abstract}
    We show that an $n$-uniform maximal intersecting family has size at most $e^{-n^{0.5+o(1)}}n^n$. This improves a recent bound by Frankl \cite{F}. The Spread Lemma of Alweiss, Lowett, Wu and Zhang \cite{Al} plays an important role in the proof.
\end{abstract}

\section{Introduction}

A family $\F$ of finite sets is called \emph{intersecting} if any two sets from $\F$ have a non-empty intersection. A family $\F$ is called $n$-uniform if every member of $\F$ has cardinality $n$. Suppose that $\F$ is an $n$-uniform intersecting family which is {\it maximal}, i.e. for any $n$-element set $F \not \in \F$ the family $\F \cup \{F\}$ is not intersecting. Note that the ground set of $\F$ is not fixed here, so we allow $F$ to have some elements which do not belong to the support of $\F$.
In 1973, Erd{\H o}s and Lov{\' a}sz \cite{EL} asked how large such a family $\F$ can be. Another way to phrase this question is to ask for the largest size of an $n$-uniform intersecting family $\F$ such that $\tau(\F) = n$. Here, $\tau(\F)$ denotes the \emph{covering number} of the family $\F$, that is, the minimum size of a set $T$ which intersects any member of $\F$. It is easy to see that any such family $\F$ is contained in a maximal intersecting family and any maximal intersecting family $\F$ satisfies $\tau(\F) = n$. 
A related question about the \emph{minimal} size of an $n$-uniform intersecting family $\F$ with $\tau(\F)=n$ was famously solved by Kahn \cite{Kahn}.

In \cite{EL}, Erd{\H o}s and Lov{\' a}sz proved the first non-trivial upper bound $n^n$ on the size of a maximal $n$-uniform intersecting family, and they also constructed such a family of size $[(e -1)n!]$ and conjectured this to be best possible (see also Section \ref{sec4} for the construction). However, 20 years later Frankl, Ota and Tokushige \cite{FOT} gave a new construction of size roughly $(n/2)^n$. The upper bound $n^n$ was improved to $(1 - 1/e +o(1))n^n$ in 1994 by Tuza \cite{Tu}. In 2011, Cherkashin \cite{Ch} obtained a bound $|\F| = O(n^{n-1/2})$ and then in 2017 Arman and Retter \cite{AR} improved this further to $(1+o(1))n^{n-1}$. The best currently known upper bound was obtained in 2019 by Frankl \cite{F}:
\begin{equation}\label{fr}
|\F| \le e^{-c n^{1/4}} n^n.
\end{equation}
Frankl \cite{F} also stated that it is possible to modify the argument and improve the exponent in (\ref{fr}) from $1/4$ to $1/3$. In this paper we provide an even stronger improvement of (\ref{fr}):

\begin{theorem}\label{el}
Let $\F$ be an $n$-uniform maximal intersecting family. Then 
\begin{equation}\label{meq}
    |\F| \le e^{-n^{1/2 + o(1)}} n^n.
\end{equation}
\end{theorem}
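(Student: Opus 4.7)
The natural approach is by contradiction: assume $|\F| > e^{-n^{1/2+o(1)}} n^n$ and derive a contradiction by constructing a cover of $\F$ of size strictly less than $n$, contradicting the equality $\tau(\F) = n$.

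Fix a minimum cover $C = \{c_1, \ldots, c_n\}$ of $\F$ and partition $\F = \bigsqcup_{\emptyset \ne T \subseteq C} \F_T$ according to the trace $T(F) := F \cap C$. The key structural observation is a cross-intersecting property inherited from $\F$ being intersecting: for any two disjoint non-empty $T, T' \subseteq C$, any sets $F \in \F_T$ and $F' \in \F_{T'}$ must intersect, and since $F \cap T' = F' \cap T = \emptyset$ this intersection sits inside $V \setminus C$. Hence the link families $\{F \setminus T : F \in \F_T\}$ and $\{F \setminus T' : F \in \F_{T'}\}$ are cross-intersecting on $V \setminus C$. A pigeonhole over the at most $2^n$ possible traces isolates a single trace $T$ along which $|\F_T|$ is still close to $n^n$ (the $2^n$ loss being comfortably absorbed by the $e^{n^{1/2+o(1)}}$ slack).

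Apply an iterative spread reduction to the link $\mathcal{G} := \{F \setminus T : F \in \F_T\}$ with a polylogarithmic spread parameter $R$: repeatedly extract sets $S_1, \ldots, S_\ell \subseteq V \setminus C$ that are each contained in at least an $R^{-|S_i|}$ fraction of the current family, passing to their joint link after each step, until the remaining family $\mathcal{G}^*$ is $R$-spread. The total depth $s = \sum |S_i|$ can be pushed up to $n^{1/2+o(1)}$ because the matching loss factor $R^s = e^{s \log\log n \cdot O(1)}$ still fits inside the $e^{n^{1/2+o(1)}}$ budget. Finally, invoke the Alweiss--Lovett--Wu--Zhang theorem \cite{Al} (in its Frankston--Kahn--Narayanan--Park formulation) on $\mathcal{G}^*$: since $R \gg \log(n - |T| - s)$, a small random subset $W$ of the (appropriately restricted) ground set contains some $H \in \mathcal{G}^*$ with positive probability, and by the cross-intersecting property the set $T \cup S_1 \cup \cdots \cup S_\ell \cup W$ is a cover of $\F$.

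\paragraph{Main obstacle.} The most delicate step is the final parameter balancing: the candidate cover $T \cup S_1 \cup \cdots \cup S_\ell \cup W$ has naive size $|T| + s + |W| \geq n$ (since the containment $W \supseteq H$ forces $|W| \ge n - |T| - s$), giving only $\tau(\F) \le n$ rather than the strict inequality needed. To improve this to $<n$ one must either exploit overlaps between $W$ and the $S_i$, sharpen the ALWZ application by first passing to an effective support of $\mathcal{G}^*$ (retaining only elements of high enough degree so that the ground set stays of polynomial size and $|W|$ can be chosen strictly smaller than the natural threshold), or combine hitting sets obtained from several choices of trace. A separate direct counting is also needed for the degenerate regime where $|T|$ is close to $n$. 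The exponent $1/2$ emerges naturally from balancing the spread-reduction budget $R^s = e^{n^{1/2+o(1)}}$ against the ALWZ requirement $R \ge C \log n$: a polylogarithmic $R$ allows reduction depth up to $s = n^{1/2+o(1)}$ while keeping the spread strong enough for ALWZ to apply.
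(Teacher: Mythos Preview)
Your proposal has a genuine gap, and you have essentially identified it yourself under ``Main obstacle'' without resolving it. The set $T \cup S_1 \cup \cdots \cup S_\ell \cup W$ that you build contains some member $F \in \F$: the element $H \in \mathcal G^*$ that ALWZ places inside $W$ is of the form $F \setminus (T \cup S_1 \cup \cdots \cup S_\ell)$ for some $F \in \F_T$, so $T \cup S_1 \cup \cdots \cup S_\ell \cup H = F$. But \emph{every} member of $\F$ is already a size-$n$ cover of $\F$, simply because $\F$ is intersecting. Your construction therefore just rediscovers this trivial fact and cannot yield a cover of size strictly below $n$. None of the suggested repairs escape this: overlaps between $W$ and the $S_i$ are irrelevant since $T, S_1, \ldots, S_\ell, H$ are pairwise disjoint and already total $n$; restricting to an ``effective support'' does not alter the fact that any $W$ containing a member of $\mathcal G^*$ has size at least $n-|T|-s$. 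There is also a quantitative slip earlier: the $2^n$ loss from pigeonholing over traces is \emph{not} absorbed by the $e^{n^{1/2+o(1)}}$ slack, since $2^n = e^{\Theta(n)} \gg e^{n^{1/2+o(1)}}$.

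The paper's argument is structurally quite different and does not attempt to exhibit a cover of size $<n$. It introduces the weight $c_\lambda(\A) = \sum_{C \in \C(\A)} \lambda^{-|C|}$ over minimal covers and proves $c_n(\A) \le e^{-(\tau(\A)-n/2)n^{-1/2-\varepsilon}}$ for every $n$-uniform intersecting $\A$ with $\tau(\A)\ge n/2$, by induction on $\tau(\A)$; Theorem~\ref{el} then follows by taking $\tau(\A)=n$ and noting $\C(\F)=\F$. The inductive engine is a submultiplicativity principle (Lemma~\ref{crlm}): if one can locate a subfamily $\F'\subset\A$ with $c_{\lambda n}(\F')\le 1$ for the right $\lambda<1$, the bound for $\A$ follows from the inductive bound for its proper subfamilies. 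The $R$-spread lemma does enter, but in a different role --- it is applied to the \emph{random minimal cover} of a minimal counterexample $\A$ and is used to cap the number of ``kernel'' equivalence classes in a structural decomposition of $\A$, thereby bounding $|\A|$. The exponent $1/2$ ultimately comes from extracting a bounded-degree intersecting subfamily of size $\approx \sqrt n$ and counting its minimal covers via the Erd\H os--Lov\'asz encoding, not from a spread-versus-depth trade-off as in your outline.
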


Frankl, Ota and Tokushige conjecture in \cite{FOT} that $|\F| \le (\alpha n)^n$ should hold for any maximal intersecting family and some absolute constant $\alpha < 1$. The methods of the present paper do not seem to be sufficient to prove this conjecture. 

To prove Theorem \ref{el}, we consider a more general problem of estimating the number of minimal coverings of an arbitrary intersecting family. Given a family $\F$, a set $T$ is called {\em a minimal covering of} $\F$ if $T \cap F \neq \emptyset$ holds for any $F \in \F$ ($T$ covers $\F$) but this condition does not hold for any proper subset $T' \subset T$ ($T$ is minimal). The minimum size of a covering of $\F$ is called the {\em covering number} and denoted $\tau(\F)$. Let $\T(\F)$ denote the family of all minimal coverings $T$ of a family $\F$. For technical reasons it is convenient to restrict attention to the subfamily $\T_{\le n}(\F) \subset \T(\F)$ of all minimal coverings of $\F$ of size at most $n$ (where $n$ will be taken equal to the uniformity of $\F$). For a not necessarily uniform family $\mathcal G$ and $\lambda > 0$ we define its {\em weight $w_\lambda(\G)$} as follows:
\begin{equation*}
    w_\lambda(\mathcal G) = \sum_{G \in \mathcal G} \lambda^{-|G|}.
\end{equation*}

If $\F$ is an $n$-uniform maximal intersecting family, then $\tau(\F) = n$ and so any element $F \in \F$ is a minimal covering of $\F$. That is, $\F \subset \T(\F)$ and so
\begin{equation}\label{weightbd1}
w_\lambda(\T(\F)) \ge \lambda^{-n} |\F|    
\end{equation}
holds for any $\lambda >0$. On the other hand, the classical encoding procedure of Erd{\H o}s and Lov\'asz \cite{EL} actually shows that any $n$-uniform family $\F$ satisfies 
\begin{equation}\label{weightbd2}
w_n(\T(\F)) \le 1.  
\end{equation}
By putting (\ref{weightbd1}) and (\ref{weightbd2}) together, we recover the upper bound $|\F| \le n^n$. Note that the inequality (\ref{weightbd2}) is actually tight for arbitrary $n$-uniform families: 

\begin{example}
Let $\F = \{F_1, \ldots, F_k\}$ be a collection of $k$ pairwise disjoint $n$-element sets $F_1, \ldots, F_k$; then clearly
$$
\T(\F) = \{T = \{x_1, \ldots, x_k\}:~~ x_i \in F_i, ~i=1, \ldots, k\}
$$ 
and so $w_n(\T(\F)) = |\T(\F)| n^{-k} = 1$.  
\end{example}

However, the family $\F$ in this example is very far from being intersecting. This suggests that perhaps one can improve (\ref{weightbd2}) provided that $\F$ is an intersecting family. Another obstruction comes from the case when $\F$ has small covering number:
\begin{example}
    Let $K_1, \ldots, K_k$ be pairwise disjoint $(n-k+1)$-element sets and let $\F$ be the family of sets of the form $F = K_i \cup T$ where $|T \cap K_j| = 1$ for all $j = 1, \ldots, k$. Then $\F$ is intersecting, $\tau(\F) = \min\{k, n-k+1\}$ and $w_n(\T(\F)) \ge \frac{(n-k+1)^{k}}{n^k} \gtrsim e^{- \frac{k^2}{n} }$. 
\end{example}

So the bound in (\ref{weightbd2}) is essentially tight for $n$-uniform intersecting families $\F$ with covering number $\tau(\F) \lesssim n^{1/2}$.
Our main result states that if $\F$ is intersecting and the covering number $\tau(\F)$ is large enough, then we indeed can win over (\ref{weightbd2}) by a significant amount:

\begin{theorem}\label{main}
For all $\varepsilon > 0$ and sufficiently large $n > n_0(\varepsilon)$ we have the following. Let $\A$ be an intersecting $n$-uniform family. Then
\begin{equation}\label{maine}
c_n(\A) \le e^{1- \frac{\tau(\A)^{1.5-\varepsilon}}{n}}.
\end{equation}
\end{theorem}

Note that this gives a substantial improvement over (\ref{weightbd2}) provided that $\tau(\A) > n^{2/3+\varepsilon}$. 
By applying Theorem \ref{main} to a maximal intersecting family $\F$ and using (\ref{weightbd1}), Theorem \ref{el} follows. 

We now turn to explain the main ideas of the proof of Theorem \ref{main}. In what follows, we use the notation $c_\lambda(\A) = w_\lambda(\T_{\le n}(\A))$ for an $n$-uniform family $\A$ and $\lambda > 0$.

Fix $\varepsilon > 0$. Using induction, we are going to show that for any $n > n_0(\varepsilon)$ and any $n$-uniform intersecting family $\A$ we have
\begin{equation}\label{induction}
    c_n(\A) \le \lambda^{\tau(\A)-n/2},
\end{equation}
where $\lambda = e^{-\frac{1}{n^{0.5+\varepsilon}}}$. This is much weaker than what is claimed in (\ref{maine}) for $n^{2/3} \le \tau(\A) \le n/2$ but gives the same result when $\tau(\A)$ is close to $n$. By writing down the inductive statement (\ref{induction}) more carefully, one can recover (\ref{maine}) in the full range of parameters, see Section \ref{sec3} for details.

If $\tau(\A) \le n/2$ then (\ref{induction}) follows from (\ref{weightbd2}) (which we will prove later) so we may assume that $\tau(\A) \ge n/2$. For the purpose of induction, we may assume that (\ref{induction}) holds for all $n$-uniform intersecting families of size strictly smaller than $\A$. The following proposition is at the core of our inductive approach:

\begin{prop}\label{prop1}
    Let $\lambda = e^{-\frac{1}{n^{0.5+\varepsilon}}}$. If there exists a subfamily $\G \subset \A$ such that $c_{\lambda n}(\G) \le 1$ then $c_{n}(\A) \le \lambda^{\tau(\A)-n/2}$.
\end{prop}

Roughly speaking, Proposition \ref{prop1} tells us that if we can find a subfamily $\G \subset \A$ which is `difficult' to cover then we can use it for the induction step and get a bound on $c_n(\A)$ in terms of $c_n(\A')$ for some proper subfamilies $\A'\subset \A$. The idea of finding a special subfamilies 
in $\A$ to bound the number of minimal covers also appears in a somewhat different form in \cite{F}. 

Proposition \ref{prop1} puts rather strict limitations on how a potential minimal family $\A$ contradicting (\ref{induction}) might look like. 
The first key observation (also originating from \cite{F}) is that all pairwise intersections of sets in $\A$ are either very small or very large. 

Indeed, let $A_1, A_2 \in \A$ be a pair of sets such that $ |A_1 \cap A_2| = k$ for some $k$. Observe that
$$
c_{\lambda n}(\{A_1, A_2\}) = \frac{k}{\lambda n} + \frac{(n-k)^2}{\lambda^2 n^2},
$$
and so we have $c_{\lambda n}(\{A_1, A_2\}) \le 1$ for any $k \in [\sqrt{n}, n-\sqrt{n}]$. So by Proposition \ref{prop1}, unless $\A$ satisfies (\ref{induction}), for any pair $A_1, A_2 \in \A$ we either have $|A_1 \cap A_2| \le \sqrt{n}$ or $|A_1 \cap A_2| \ge n-\sqrt{n}$. 

Let $k = \sqrt{n}$. The above property allows us to write $\A$ as a union
\begin{equation}\label{decompK}
\A = \K_1 \cup \ldots \cup \K_N,
\end{equation}
where for any $i, j = 1, \ldots, N$ and $K_i \in \K_i$ and $K_j \in \K_j$ we have $|K_i \cap K_j| \ge n-k$ if $i=  j$ and $|K_i \cap K_j| \le k$ otherwise. This decomposition step is actually quite robust and works for any $k < n/3$; so if one were to prove (\ref{induction}) with $\lambda < 1-c$ for a small constant $c$, then one may still assume that, say, $|A_1 \cap A_2| \not \in [0.1 n, 0.9 n]$ holds for all $A_1, A_2 \in \A$, and so we have (\ref{decompK}) with $k = 0.1n$. 

The decomposition (\ref{decompK}) has the following properties:

\paragraph{Each family $\K_i$ has a core of size $n-5k$.} That is, there exists a set $K_i$ of size $n-5k$ such that $K_i \subset A$ for any $A \in \K_i$. Note that we only know that $|A_1\cap A_2| \ge n-k$ for any $A_1, A_2 \in \K_i$ and so a priori the sets in $\K_i$ do not have to have a large common intersection. However, if $|\bigcap \K_i|\le n-5k$ then we can take $\G = \K_i$ in Proposition \ref{prop1}: 

\begin{lemma}\label{ker}
Let $k \le  n/10$. Let $\K$ be an $n$-uniform family. Suppose that there is an $(n-k)$-element set $K$ such that we have $|F \cap K| \ge n-2k$ for every $F \in \K$.
Then we either have $c_{n-k}(\K) \le 1$ or $|\bigcap \K| \ge n-5k$.
\end{lemma}

The idea is use the Lubell--Yamamoto--Meshalkin inequality to control possible intersections of a minimal cover of $\K$ with the set $K$ above. This step is also quite flexible and can be employed if one were to prove (\ref{induction}) with $\lambda=1-c$ (and $k \approx c n$).

\paragraph{Each family $\K_i$ is small.} Namely, we have $|\K_i| \le {\tau(\A) + 2k \choose 2k}$ for all $i$. We say that a family $\F$ is $\tau$-critical if removing any set from $\F$ reduces $\tau(\F)$. The family $\A$ in question is $\tau$-critical: if not, then for some proper $\A' \subset \A$ we have $\tau(\A') = \tau(\A)$. But then by the induction assumption we get
$$
c_n(\A) \le c_n(\A') \le \lambda^{\tau(\A')-n/2} = \lambda^{\tau(\A)-n/2}.
$$
Here we also use a simple monotonicity property $c_n(\A) \le c_n(\A')$ which we prove in the next section.

So we can apply the following simple lemma:

\begin{lemma}\label{kerup}
Let $\A$ be a $\tau$-critical $n$-uniform family (that is, removing any element from $\A$ reduces $\tau(\A)$) and let $\K \subset \A$ be a subfamily such that $|\bigcap \K| \ge n-k$ for some $k \ge 0$. 
Then $|\K| \le {\tau(\A) + k \choose k}$.
\end{lemma}

\begin{proof}
Denote $K = \bigcap \K$. 
By $\tau$-criticality of $\A$, for any set $A \in \K$ there is a covering $T_A$ of $\A \setminus \{A\}$ of size less than $\tau(\A)$ which does not intersect $A$. Note that $T_A$ does not intersect $K$ and so it is a covering of the family $(\K \setminus \{A\}) \setminus K$. Thus, the system of pairs of sets $(A\setminus K, T_A)_{A \in \K}$ satisfies the Bollob{\'a}s's Two Families theorem \cite[Page 113, Theorem 8.8]{J} and so $|\K| \le {\tau(\A) + k \choose k}$.  
\end{proof}

\paragraph{The number of families $N$ is small.} Namely, we may assume that $N \le n^{C}$ holds for some constant $C$. This is the part of the proof where we rely on the Spread Lemma of Alweiss--Lovett--Wu--Zhang \cite{Al}. Namely, we have the following:

\begin{lemma}\label{small}
Let $\A$ be an $n$-uniform family where $n$ is sufficiently large. Let $\B \subset \A$ be a subfamily such that $|B_1 \cap B_2| \le k$ for all distinct $B_1, B_2 \in \B$. 
If $k \le \frac{n}{10^4\log n}$ then one of the following 2 possibilities holds:
\begin{enumerate}
    \item We have $|\B| \le n^{C}$ for some absolute constant $C$.
    \item There is a proper subfamily $\A' \subset \A$ such that
$$
2^{\tau(\A)}c_n(\A) \le 2^{\tau(\A')} c_n(\A').
$$
\end{enumerate}
\end{lemma}

Note that this lemma has a mild restriction $k \lesssim \frac{n}{\log n}$. This means that the best possible bound in (\ref{induction}) using Lemma \ref{small} has $\lambda = 1- \frac{c}{\log n}$  (corresponding to a bound of the form $|\F| \le e^{-\frac{cn}{\log n}} n^n$ for maximal intersecting families). So even though this is not enough to prove an exponential bound in the Erd{\H o}s--Lov\'asz problem, this is by far not the main bottleneck of the argument.

The proof of Lemma \ref{small} is based on the following idea. Let $p = \frac{C \log n}{n}$ and consider a random set ${\bf U}$ where each element of the ground set is included in ${\bf U}$ independently with probability $p$. If the family $\T_{\le n}(\A)$ is not $\frac{n}{2}$-spread the one can check that the second option of the lemma holds. Otherwise, by the Spread Lemma (see Lemma \ref{rspread} below), with probability at least $0.9$ there exists an element $T \in \T_{\le n}(\A)$ such that $T \subset {\bf U}$. On the other hand, a routine second moment computation shows that if $N$ is large enough and sets $B_1, \ldots, B_N$ have small pairwise intersections, then with probability at least $0.9$ there exists $i \in [N]$ so that $B_i$ is disjoint from ${\bf U}$. So with probability at least $0.8$ there is a covering $T \subset {\bf U}$ of $\A$ and a set $B_i \in \A$ disjoint from ${\bf U}$. In particular, $T \cap B_i = \emptyset$ with positive probability which contradicts the definition of a covering.

\paragraph{Conclusion: $\A$ is small.} We conclude from the above observations that the family $\A$ itself must be small:
\begin{equation}\label{Asmall}
|\A| \le |\K_1|+\ldots+|\K_N| \le N {\tau(\A)+5k\choose 5k} \le n^{6k}.    
\end{equation}
Once we know that the family $\A$ is small, we can start exploiting the fact that $\tau(\A)$ is large. In fact, we show that $\A$ cannot be too `clustered' around a few elements of the ground set since otherwise we can find a covering of $\A$ of size less than $\tau(\A)$ by sampling a random set according to the degree distribution of $\A$. A careful execution of this idea results in the following lemma:
\begin{lemma}\label{dec}
Let $n\ge 1$ and $m, t \ge 1$. Let $\A$ be an $n$-uniform family of size at most $e^{m}$ and $\tau(\A) \ge t$. Then, for every $l \ge 1$, there is a subfamily $\A' \subset \A$ such that $\tau(\A \setminus \A') \le t/2$ and for every $i = 1, \ldots, l$ we have
\begin{equation}\label{deceq}
    \E_{A_1, \ldots, A_i \in \A'} |A_1 \cap \ldots \cap A_i| \le C_l \left(\frac{m}{t} \right)^{i-1} n,
\end{equation}
where $C_l \ll 2^{l^2}$ depends only on $l$ and the average is taken over all $A_1, \ldots, A_l \in \A$ chosen uniformly and independently.
\end{lemma}

That is, we can remove a few sets from $\A$ and obtain the property that the $l$-wise intersections of sets in $\A$ are very small on average. Note that in our case $m \sim k \log n \sim \sqrt{n} \log n$ and $t = \tau(\A) \ge n/2$, so that 
$$
C_l \left(\frac{m}{t} \right)^{l-1} n \lesssim n^{2- l/2},
$$
that is, almost all $l$-wise intersections of sets from $\A$ are empty for constant $l$. Let $r = n^{0.5-\varepsilon/2}$ and $l = 10\varepsilon^{-1}$ and sample a uniformly random subfamily $\B = \{B_1, \ldots, B_r\} \subset \A'$, where $\A'$ is given by Lemma \ref{dec}. Then by (\ref{deceq}) and the union bound, with positive probability all $l$-wise intersections of sets in $\B$ are empty. We remark that the family $\B$ is a natural generalization of `brooms' used by Frankl in \cite{F}; the advantage of our approach is that we can find (generalized) brooms of size $\sim n^{1/2}$ whereas Frankl could only construct brooms of size $\sim n^{1/4}$.

The final step of the proof is to show that one can take $\G = \B$ in Proposition \ref{prop1}:
\begin{lemma}\label{lbodeg}
Let $n \ge 1$ and $r \ge 2l$ be such that $r^2 \le l^3 n$. Let $\B$ be an $n$-uniform intersecting family of size $r$ such that every $l$ distinct sets from $\B$ have an empty intersection. Then for $k \le \frac{r}{20 l^3}$ we have
\begin{equation*}
    c_{n - k}(\B) \le 1.
\end{equation*}
\end{lemma}

The proof of this lemma crucially uses the intersecting property of the family $\B$. In fact, this is the only place in the argument where we really use the fact that the initial family is intersecting. The construction of a large bounded degree family $\B$ and Lemma \ref{lbodeg} appear to be the main bottlenecks of the argument and are the reason for the resulting bound of $e^{-n^{0.5+\varepsilon}}n^n$ for maximal intersecting families.

The proof of Lemma \ref{lbodeg} is based on a more careful analysis of the classical Erd{\H o}s--Lov\'asz encoding procedure: the intersecting property and bounded degree of $\B$ ensure that there is enough `overlap' between sets $B_i$ which makes the encoding more efficient. This completes the proof of Theorem \ref{main}. The next section contains all the proofs of the lemmas which appeared in this outline and in Section \ref{sec3} we formally deduce Theorem \ref{main}. Section \ref{sec4} contains some final remarks and questions.

\section{Proving auxiliary results}\label{sec2}

\subsection{Minimal covers}\label{sec21}

Fix $n \in \N$ and let $\A$ be a finite family of sets of size at most $n$. For $\lambda > 0$, we define the \emph{weight} $w_\lambda(\A)$ of $\A$ by the following expression:
\begin{equation*}
    w_\lambda(\A) = \sum_{A \in \A} \lambda^{-|A|}.
\end{equation*}

The parameter $\lambda$ will be usually taken to be $\lambda = n$ or $\lambda = n - k$ for a relatively small number $k$. 
The following characteristic of a family will be crucial for our arguments.  Recall that a covering of $\A$ is a set $T$ intersecting all members of $\A$ and a covering $T$ is minimal if any proper subset $T' \subset T$ does not cover $\A$. We denote $\tau(\A)$ the minimum size of a 
covering of $\A$.  
We denote $\T_{\le n}(\A)$ the family of all minimal covers of $\A$ of size at most $n$. For $\lambda > 0$, put 
$$
c_\lambda(\A) = w_\lambda(\T_{\le n}(\A)) = \sum_{T \in \T_{\le n}(\A)} \lambda^{-|T|}.
$$
We remark that the family $\T_{\le n}(\F)$ was also introduced in \cite{Tu} to prove the bound $|\F| \le (1 - e^{-1} + o(1))n^n$. 
We have the following basic monotonicity result:

\begin{obs}\label{mon}
For any family $\F$ and all $\lambda \le \mu$ we have 
$$
c_\mu(\F) \le \left( \frac{\lambda}{\mu} \right)^{\tau(\F)} c_\lambda(\F).
$$
\end{obs}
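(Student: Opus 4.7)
The plan is straightforward and essentially amounts to factoring out a common ratio from each summand in the definition of $c_\mu(\F)$.

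First I would rewrite $\mu^{-|C|} = (\lambda/\mu)^{|C|}\,\lambda^{-|C|}$, so that
\begin{equation*}
c_\mu(\F) = \sum_{C \in \C(\F)} \mu^{-|C|} = \sum_{C \in \C(\F)} \left(\frac{\lambda}{\mu}\right)^{|C|} \lambda^{-|C|}.
\end{equation*}
The next step is to use the fact that every $C \in \C(\F)$ is a cover of $\F$, so $|C| \ge \tau(\F)$ by the definition of the covering number. Combined with the hypothesis $\lambda \le \mu$, which means $\lambda/\mu \le 1$, we get the pointwise bound $(\lambda/\mu)^{|C|} \le (\lambda/\mu)^{\tau(\F)}$ for each $C \in \C(\F)$.

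Substituting this into the sum yields
\begin{equation*}
c_\mu(\F) \le \left(\frac{\lambda}{\mu}\right)^{\tau(\F)} \sum_{C \in \C(\F)} \lambda^{-|C|} = \left(\frac{\lambda}{\mu}\right)^{\tau(\F)} c_\lambda(\F),
\end{equation*}
which is the desired inequality. There is no real obstacle here: the only ingredient beyond algebraic manipulation is the observation that every minimal cover has size at least $\tau(\F)$, which is immediate from the definition of $\tau$.
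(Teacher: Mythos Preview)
Your proof is correct and is essentially identical to the paper's: the paper also observes that $|C|\ge\tau(\F)$ for every $C\in\C(\F)$, deduces $\mu^{-|C|}\le(\lambda/\mu)^{\tau(\F)}\lambda^{-|C|}$, and sums over $\C(\F)$.
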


\begin{proof}
Indeed, since every minimal covering $T$ of $\F$ has size at least $\tau(\F)$ we have 
$$
\mu^{-|T|} \le \left( \frac{\lambda}{\mu} \right)^{\tau(\F)} \lambda^{-|T|}.
$$
Summing over all $T \in \T_{\le n}(\F)$ gives the desired inequality.
\end{proof}

Let $X$ be the ground set of $\A$. For $S \subset X$ we denote by $\A(\bar S)$ the set of elements of $\A$ which do not intersect $S$. The following lemma lies at the foundation of our arguments.

\begin{lemma}\label{lm}
For any subfamily $\A'\subset \A$ of any family $\A$ and for any $\lambda > 1$ we have
\begin{equation}\label{ineqmain}
c_\lambda(\A) \le \sum_{T' \in \T_{\le n}(\A')} \lambda^{-|T'|} c_\lambda(\A(\bar T')),    
\end{equation}
In particular, we have
$$
c_\lambda(\A) \le c_\lambda(\A') \max_{T' \in \T_{\le n}(\A')} c_\lambda(\A(\bar T')).
$$
\end{lemma}

\begin{proof}
Each minimal covering $T \in \T_{\le n}(\A)$ contains a minimal covering $T' \subset T$ of $\A'$. Moreover, by the minimality of $T$, the set $T\setminus T'$ is a minimal covering of the family $\A(\bar T')$. So each term $\lambda^{-|T|}$ on the left hand side of (\ref{ineqmain}) corresponds to at least one term $\lambda^{-|T'|} \lambda^{-|T\setminus T'|}$ on the right hand side of (\ref{ineqmain}) (there could be more than one way to choose $T'$). This proves (\ref{ineqmain}).
\end{proof}

In particular, we have:

\begin{cor}[Tuza, \cite{Tu}]\label{st}
For any $n$-uniform family $\F$ we have $c_n(\F) \le 1$.
\end{cor}

This bound was also proved in \cite{Tu}, similar ideas appear in \cite{G}.

\begin{proof}
Note that if $|\F| \le 1$ then the proposition holds.
If $|\F| \ge 2$ then choose a proper non-empty subfamily $\F' \subset \F$ and apply the second part of Lemma \ref{lm}. The statement now follows by induction.
\end{proof}

The basic idea behind the proof of Theorem \ref{el} is to use apply Lemma \ref{lm} to various subfamilies $\F'$ with small $c_n(\F')$ and use induction to estimate the terms $c_n(\F(\bar T'))$. More precisely, we will use the following consequence of Lemma \ref{lm}.




\begin{lemma}\label{crlm}
    Let $f: \R\rightarrow \R$ be a differentiable convex function. Let $\F$ be an $n$-uniform family such that for any proper subfamily $\A \subset \F$ we have $c_n(\A) \le e^{- f(\tau(\A))}$. Let $\lambda = e^{-f'(\tau(\F))}$ and suppose that there exists a non-empty family $\F' \subset \F$ such that $c_{\lambda n}(\F') \le 1$. Then $c_n(\F) \le e^{-f(\tau(\F))}$.
\end{lemma}

\begin{proof}
    By Lemma \ref{lm} applied to $\F' \subset \F$ we have 
\begin{equation*}
    c_n(\F) \le \sum_{T \in \T_{\le n}(\F')} n^{-|T|} c(\F(\bar T)) \le \sum_{T \in \T_{\le n}(\F')} n^{-|T|} e^{-f(\tau(\F(\bar T)))}. 
\end{equation*}
We have $\tau(\F(\bar T)) \ge \tau(\F) - |T|$ and so by convexity $f(\tau(\F(\bar T))) \ge f(\tau(\F)) - |T| f'(\tau(\F))$, which leads to
$$
c_n(\F) \le \sum_{T \in \T_{\le n}(\F')} n^{-|T|} e^{f'(\tau(\F)) |T| - f(\tau(\F))} = c_{\lambda n}(\F') e^{- f(\tau(\F))} \le e^{- f(\tau(\F))},
$$
completing the proof.
\end{proof}

Note that Proposition \ref{prop1} from the proof outline above follows from this lemma with $f(t) = - (t-n/2) \log \lambda$.

\subsection{Large intersections}\label{sec22}

In this section we study families $\K$ in which every pair of sets has ``large" intersection. 

\begin{lemma}\label{ker2}
Let $k \le  n/10$. Let $\K$ be an $n$-uniform family. Suppose that there is an $(n-k)$-element set $K$ such that we have $|F \cap K| \ge n-2k$ for every $F \in \K$.
Then we either have $c_{n-k}(\K) \le 1$ or $|\bigcap \K| \ge n-5k$.
\end{lemma}

\begin{proof}
Let $K' = \bigcap \K$ and $R = K \setminus K'$ and let $u = |K' \setminus K|$. Note that $u \in [0, k]$ since $|F \setminus K| \le k$ for all $F \in \K$.
Denote by $\A$ the family of all sets $F \setminus K'$ for $F \in \K$. By the definition of $\A$ we have $\tau(\A) \ge 2$. Note also that for any $A \in \A$ we have $|A \setminus R| \le  k-u$ (since $A$ has size at most $k$ and contains the $u$-element set $K'\setminus K$). 

Note that a minimal covering $T$ of the family $\K$ is either contained in $K'$ and $|T| = 1$ or $T \cap K' = \emptyset$. In the latter case $T$ is obviously a minimal covering of $\A$. Thus, we have
\begin{equation}\label{ckca}
c_\lambda(\K) = \frac{|K'|}{\lambda} + c_\lambda(\A).
\end{equation}

Let $\T_1 \subset \T_{\le n}(\A)$ be the family of minimal coverings $T$ of $\A$ which are subsets of $R$. Let $\T_2 = \T_{\le n}(\A) \setminus \T_1$. We will estimate weights of $\T_1$ and $\T_2$ separately. 

Note that $\T_1 \subset 2^R$ and observe that $T' \not\subset T$ for any distinct $T, T' \in \T_1$ (i.e. $\T_1$ is an antichain in $2^R$). 

\begin{prop}\label{sp}
Suppose that $\T \subset 2^R$ is an antichain such that every element of $\T$ has size at least $t$. If $\lambda \ge |R|$ then
$$
\sum_{T \in \T} \lambda^{-|T|} \le \lambda^{-t} {|R| \choose t}.
$$
\end{prop}

This statement also appears in \cite{F0}.

\begin{proof}
Note that for any $s \ge t$ we have ${|R| \choose s} \le {|R| \choose t} \lambda^{s-t}$ and so by the Lubell–Yamamoto–Meshalkin inequality \cite[Page 112, Theorem 8.6]{J}:
$$
\sum_{T \in \T} \lambda^{-|T|} \le \sum_{T \in \T} \lambda^{-t}{|R| \choose t} / {|R| \choose |T|} = \lambda^{-t}{|R| \choose t} \sum_{T \in \T} \frac{1}{{|R| \choose |T|}} \le \lambda^{-t} {|R| \choose t}.
$$
\end{proof}

By Lemma \ref{sp} for every $\lambda \ge |R|$ the $\lambda$-weight of $\T_1$ is at most 
\begin{equation}\label{c1}
    w_\lambda(\T_1) \le \lambda^{-\tau(\A)} {|R| \choose \tau(\A)} \le  \frac{(|R|/\lambda)^{\tau(\A)}} {\tau(\A)!}.
\end{equation}
Now we estimate the weight of $\T_2$. Let $\S \subset 2^R$ be the family of all sets $S \subset R$ such that $S$ {\it does not} cover $\A$. Then the weight of $\T_2$ is bounded by the following expression:
\begin{equation}\label{exp}
    w_\lambda(\T_2) \le \sum_{S \in \S} \lambda^{-|S|} c_\lambda(\A(\bar S) \setminus R).
\end{equation}
Indeed, the contribution of an element $T \in \T_2$ on the left hand side is accounted by the term corresponding to $S = T\cap R \in \S$ on the right hand side (since $T\setminus R$ is a minimal covering of the family $\A(\bar S)\setminus R$).
Here the family $\A(\bar S) \setminus R$ consists of all sets of the form $A \setminus R$ where $A \in \A$ does not intersect $S$. Every element in $\A(\bar S)\setminus R$ has cardinality at most $k-u$ and so by Observation \ref{mon} and Corollary \ref{st} applied to $\A(\bar S)\setminus R$ for every $\lambda \ge k-u$ we have
\begin{equation}\label{eql}
    c_\lambda(\A(\bar S)\setminus R) \le \left(\frac{k-u}{\lambda}\right)^{\tau(\A(\bar S)\setminus R)} c_{k-u}(\A(\bar S)\setminus R) \le \left(\frac{k-u}{\lambda}\right)^{\tau(\A(\bar S)\setminus R)}.
\end{equation}

Let $S \in \S$. Note that we have the following lower bound on $\tau(\A(\bar S)\setminus R)$:
\begin{equation*}
    \tau(\A(\bar S)\setminus R) \ge \max \{ 1, \tau(\A)-|S|\}.
\end{equation*}
Using this lower bound, (\ref{exp}) and (\ref{eql}) we obtain an upper bound on the weight of $\T_2$ for $\lambda \ge k$:
\begin{align}\label{wc2}
    w_\lambda(\T_2) \le \sum_{s = 0}^{\tau(\A)-1} \lambda^{-s} {|R| \choose s} \left(\frac{k-u}{\lambda}\right)^{\tau(\A) - s} +  \sum_{s = \tau(\A)}^{|R|} \lambda^{-s} {|R| \choose s}\left(\frac{k-u}{\lambda}\right).
\end{align}

Now we combine all obtained inequalities to prove Lemma \ref{ker}. Suppose that $|K'| = |\bigcap \K| < n-5k$, we need to show that $c_{n-k}(\K) \le 1$ holds. We have 
$$
|K\cup K'| = |K'| + |K\setminus K'|= |K| + |K'\setminus K|,
$$
so that $|K'| = n-k + u - r$ holds. In particular, by the assumption $|K'|< n-5k$ we have $n-k \ge r > u+4k$.

Denote $t = \tau(\A) \ge 2$, $r = |R|$, $\rho = \frac{r}{n-k}$ and $\delta = \frac{k-u}{n-k}$.
We can use (\ref{c1}) and (\ref{wc2}) with $\lambda = n-k$ and get:
\begin{align*}
    w_{n-k}(\T_1) &\le \frac{\rho^{t}}{t!},\\
    w_{n-k}(\T_2) &\le \sum_{s=0}^{t-1} \frac{\rho^s \delta^{t-s}}{s!} + \sum_{s=t}^{r} \frac{\rho^s \delta}{s!}. 
\end{align*}

By (\ref{ckca}), formula $|K'| = n-k+u-r$ and decomposition $\T_{\le n}(\K) = \T_1\cup \T_2$:
\begin{align*}
c_{n-k}(\K) &\le \frac{n-k+u-r}{n-k} + w_{n-k}(\T_1) + w_{n-k}(\T_2) \le\\
    & \le \frac{n-k+u-r}{n-k} + \frac{\rho^t}{t!} + \sum_{s=0}^{t-1} \frac{\rho^s \delta^{t-s}}{s!} + \sum_{s=t}^{r} \frac{\rho^s \delta}{s!}. 
\end{align*}
Both $\rho$ and $\delta$ are between 0 and 1 so it is easy to see that the second line is the largest when $t=2$, i.e.
$$
c_{n-k}(\K) \le \frac{n-k+u-r}{n-k} + \frac{\rho^2}{2} + \delta^2 + \frac{\delta \rho}{2} + \sum_{s=2}^{r} \frac{\rho^s \delta}{s!} \le \frac{n-k+u-r}{n-k} + \frac{\rho}{2} + 2\delta,
$$
where in the last transition we used $0\le \delta, \rho \le 1$ to group the last 3 terms together and replace $\rho^2$ by $\rho$. Recalling $\rho = \frac{r}{n-k}$ and $\delta = \frac{k-u}{n-k}$ we get
$$
c_{n-k}(\K) \le \frac{n+ k - u - r/2}{n-k} \le \frac{n+k - r/2}{n-k} \le 1,
$$
since $r \ge 4k$ and $u \ge 0$.
\end{proof}

\subsection{Small intersections}

In this section we show that in some cases it is possible to estimate the size of a subfamily $\B \subset \A$ provided that elements of $\B$ have very small pairwise intersections.

\begin{lemma}\label{small2}
Let $\A$ be an $n$-uniform family where $n$ is sufficiently large. Let $\B \subset \A$ be a subfamily such that $|B_1 \cap B_2| \le k$ for all distinct $B_1, B_2 \in \B$. 
If $k \le \frac{n}{10^4\ln n}$ then one of the following 2 possibilities holds:
\begin{enumerate}
    \item We have $|\B| \le n^{C}$ for some absolute constant $C$.
    \item There is a proper subfamily $\A' \subset \A$ such that 
$$
2^{\tau(\A)}c_n(\A) \le 2^{\tau(\A')} c_n(\A').
$$
\end{enumerate}
\end{lemma}

To prove this lemma we will need a result on $R$-spread families which was recently used to substantially improve the upper bound in the Erd{\H o}s-Rado Sunflower problem \cite{Al}, \cite{R}. We will use a variant of this result proved in \cite[Corollary 7]{Ta}. Let $\bf C$ be a random set, that is a probability distribution on $2^X$ for some finite ground set $X$. For $R \ge 1$ we say that $\bf C$ is {\it an $R$-spread random set} if for every set $S \subset X$ the probability that $\bf C$ contains $S$ is at most $R^{-|S|}$.

\begin{lemma}[\cite{Ta}]  \label{rspread}
Let $R > 1$, $\delta \in (0, 1)$ and $m \ge 1$. Let $\bf C$ be an $R$-spread random subset of a finite set $X$. Let ${\bf W} \subset X$ be a random set independent from $\bf C$ and such that each $x \in X$ belongs to $\bf W$ with independent probability $1 - (1-\delta)^m$. Then there exists a random set $\bf C'$ with the same distribution as $\bf C$ and such that 
\begin{equation*}
\E |{\bf C}' \cap {\bf W}| \le \left(  \frac{5}{\log_2 R\delta} \right)^m \E |{\bf C}|.
\end{equation*}
\end{lemma}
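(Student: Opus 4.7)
The plan is to assume $|\B| > n^{3000}$ and exhibit a proper subfamily $\A' \subsetneq \A$ satisfying conclusion 2. Write $f(\F) := 2^{\tau(\F)} c_n(\F)$ and let $\mathbf{C}$ be the random minimal cover of $\A$ drawn from the distribution $\mu(C) := n^{-|C|}/c_n(\A)$ on $\C(\A)$. I split the argument according to whether $\mathbf{C}$ is $(n/2)$-spread, i.e., whether $\Pr[\mathbf{C} \supseteq S] \le (2/n)^{|S|}$ holds for every $S$.

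\emph{The non-spread case.} Suppose there is a non-empty $S$ with $\Pr[\mathbf{C} \supseteq S] > (2/n)^{|S|}$, and put $\A' := \A(\bar S)$. The key observation is that the map $C \mapsto C \setminus S$ injects $\{C \in \C(\A) : C \supseteq S\}$ into $\C(\A(\bar S))$: for every $x \in C \setminus S$, minimality of $C$ in $\C(\A)$ yields an $A \in \A$ with $A \cap C = \{x\}$, and since $x \notin S$ this forces $A \cap S = \emptyset$, so $A \in \A(\bar S)$ witnesses the minimality of $C \setminus S$ for $\A(\bar S)$. Multiplying the non-spread inequality $\sum_{C \supseteq S} n^{-|C|} > (2/n)^{|S|} c_n(\A)$ by $n^{|S|}$ and using this injection yields
$$
c_n(\A(\bar S)) \;\ge\; \sum_{C \supseteq S} n^{-|C \setminus S|} \;=\; n^{|S|} \sum_{C \supseteq S} n^{-|C|} \;>\; 2^{|S|} c_n(\A).
$$
Combined with the standard bound $\tau(\A(\bar S)) \ge \tau(\A) - |S|$, this gives $f(\A') > 2^{\tau(\A)-|S|} \cdot 2^{|S|} c_n(\A) = f(\A)$. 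Properness $\A' \subsetneq \A$ holds because for any $x \in S$ and any $C \in \C(\A)$ with $C \supseteq S$, the necessity of $x$ in $C$ produces some $A \in \A$ containing $x$, and $A \cap S \ne \emptyset$ shows $\A' \ne \A$. (In the degenerate case $\A(\bar S) = \emptyset$ one has $\C(\A(\bar S)) = \{\emptyset\}$, $f(\A') = 1$, and the displayed inequality collapses to $1 > 2^{|S|} c_n(\A) \ge 2^{\tau(\A)} c_n(\A) = f(\A)$ since any such $S$ must cover $\A$ and hence $|S| \ge \tau(\A)$.)

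\emph{The spread case.} Suppose instead $\mathbf{C}$ is $(n/2)$-spread. I apply Lemma \ref{rspread} with $R = n/2$, $\delta = 1/2$ and $m = \lceil 200 \ln n \rceil$, noting $\E|\mathbf{C}| \le n$, to obtain a coupled copy $\mathbf{C}'$ of $\mathbf{C}$ together with an independent random set $\mathbf{W}$ of density $\pi := 1 - 2^{-m} = 1 - n^{-\Theta(1)}$ satisfying
$$
\E\, |\mathbf{C}' \cap \mathbf{W}| \;\le\; \bigl( 5/\log_2(n/4) \bigr)^m \cdot n,
$$
which decays super-polynomially in $n$. On the other hand, for $X := \#\{B \in \B : B \subseteq \mathbf{W}\}$ we have $\E X = |\B|\pi^n \ge n^{3000}/2$, and the second-moment estimate
$$
\mathrm{Var}(X)/(\E X)^2 \;\le\; 1/\E X + (\pi^{-k} - 1)
$$
is $o(1)$ — here the hypothesis $k \le n/(10^4 \ln n)$ together with $1-\pi = n^{-\Theta(1)}$ keeps $\pi^{-k} - 1$ negligible. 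Chebyshev therefore gives $\Pr[\exists B \in \B : B \subseteq \mathbf{W}] = 1 - o(1)$. But $\mathbf{C}'$ covers $\A \supseteq \B$, so whenever $B \subseteq \mathbf{W}$ we have $\emptyset \ne \mathbf{C}' \cap B \subseteq \mathbf{C}' \cap \mathbf{W}$, forcing $\E|\mathbf{C}' \cap \mathbf{W}| \ge 1 - o(1)$ and contradicting the super-polynomial bound.

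The main delicate point is the simultaneous tuning of $R, \delta, m$ in Lemma \ref{rspread}: $R$ must be at least $n/2$ so that failure of $R$-spreadness produces a gain $c_n(\A(\bar S)) > 2^{|S|} c_n(\A)$ large enough to absorb the $2^{-|S|}$ loss coming from $\tau(\A(\bar S)) \ge \tau(\A) - |S|$; the product $R\delta$ must grow with $n$ so that $(5/\log_2(R\delta))^m$ genuinely decays; and $m$ must be of order $\ln n$ so that $\pi = 1 - (1-\delta)^m$ is close enough to $1$ that the second-moment computation places some $B \in \B$ inside $\mathbf{W}$ with overwhelming probability. The polynomial threshold $n^{3000}$ in the conclusion and the hypothesis $k \le n/(10^4 \ln n)$ are calibrated precisely to make the variance estimate $o(1)$ comfortably beat the super-polynomial decay of $\E|\mathbf{C}' \cap \mathbf{W}|$.
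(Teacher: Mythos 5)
Your proposal does not prove the statement in question. The statement is Lemma \ref{rspread} itself --- the spread-set coupling lemma of Alweiss--Lovett--Wu--Zhang/Rao in the form given by Tao: given an $R$-spread random set $\bf C$ and an independent random set $\bf W$ of density $1-(1-\delta)^m$, one must construct a coupled copy ${\bf C}'$ with the same distribution as $\bf C$ whose part missed by (resp.\ meeting) $\bf W$ has expected size at most $\left(\frac{5}{\log_2 R\delta}\right)^m \E|{\bf C}|$. What you wrote is instead a proof of Lemma \ref{small} (the dichotomy ``$|\B|\le n^{3000}$ or there is a proper $\A'\subset\A$ with $2^{\tau(\A)}c_n(\A)\le 2^{\tau(\A')}c_n(\A')$''), and it explicitly \emph{invokes} Lemma \ref{rspread} as a black box (``I apply Lemma \ref{rspread} with $R=n/2$, $\delta=1/2$, $m=\lceil 200\ln n\rceil$''). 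So with respect to the target statement the attempt is circular: nothing in it addresses why an $R$-spread distribution admits such a coupling. A real proof requires the iterative refinement/absorption argument of \cite{Al}, \cite{R} (or Tao's Shannon-entropy version \cite{Ta}): a single random bite ${\bf W}_1$ of density $\delta$ allows one to replace $\bf C$ by an equidistributed ${\bf C}'$ with $\E|{\bf C}'\setminus {\bf W}_1|\le \frac{5}{\log_2 R\delta}\,\E|{\bf C}|$, exploiting spreadness to absorb large fragments, and one then iterates over $m$ independent bites whose union has density $1-(1-\delta)^m$. None of that machinery appears in your write-up; note also that the paper itself does not reprove this lemma but quotes it from \cite[Corollary 7]{Ta}.

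A secondary remark, in case you intended your text as a proof of Lemma \ref{small}: your ``spread case'' relies on the literal ``$\cap$'' in the paper's statement of Lemma \ref{rspread}, i.e.\ on being able to couple ${\bf C}'$ to be nearly disjoint from a random set $\bf W$ of density close to $1$. That reading is false (take $\A=\{B\}$ a single $n$-set and $\bf C$ a uniform singleton of $B$: it is $(n/2)$-spread, yet any equidistributed copy meets a density-$(1-2^{-m})$ random set with probability at least $1-n2^{-m}$); the ``$\cap$'' is a typo for ``$\setminus$'', as both Tao's Corollary 7 and the derivation and use of Corollary \ref{cors} (applied to ${\bf W}=X\setminus{\bf U}$) make clear. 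This is also visible in your own argument: your contradiction never uses $|\B|>n^{3000}$, so it would ``prove'' that the spread case is outright impossible. With the correct lemma one is forced into the paper's parameter regime --- $\delta=\Theta(1/n)$, $m=\Theta(\log n)$, and a set $\bf U$ of the \emph{small} density $(1-\delta)^m$ that must contain some $B\in\B$ --- which is exactly where the threshold $n^{3000}$, the bound $k\le \frac{n}{10^4\ln n}$, and the second-moment computation genuinely enter.
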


We will in fact only need the following corollary of this result.

\begin{cor}\label{cors}
In the notations of Lemma \ref{rspread}, let $\C \subset 2^X$ be the support of the random set $\bf C$. Then the probability that a random set $\bf W$ of density $1 - (1-\delta)^m$ contains an element of $\C$ is at least 
\begin{equation*}
    \P(\exists C \in \C:~ C \subset {\bf W}) \ge 1 - \left(  \frac{5}{\log_2 R\delta} \right)^m \E |{\bf C}|.
\end{equation*}
\end{cor}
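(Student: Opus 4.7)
The plan is to deduce the corollary directly from Lemma \ref{rspread} via Markov's inequality, using the observation that the coupled set $\mathbf{C}'$ is distributed as $\mathbf{C}$ and hence almost surely takes values in $\C$.

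First I would invoke Lemma \ref{rspread} with the given parameters $R$, $\delta$, $m$ to obtain the coupling of $\mathbf{C}'$ with $\mathbf{W}$. Since $\mathbf{C}'$ has the same law as $\mathbf{C}$, it almost surely takes values in $\C$; this is the observation that turns the expectation bound into something useful about the family $\C$, as opposed to an abstract statement about two random subsets of $X$.

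Next I would convert the expectation bound into a probability bound by Markov's inequality. The event $\{\mathbf{C}' \subset \mathbf{W}\}$ coincides with $\{|\mathbf{C}' \setminus \mathbf{W}| = 0\}$, and since $|\mathbf{C}' \setminus \mathbf{W}|$ is a non-negative integer valued random variable,
$$
\P\bigl(\mathbf{C}' \not\subset \mathbf{W}\bigr) = \P\bigl(|\mathbf{C}' \setminus \mathbf{W}| \ge 1\bigr) \le \E\,|\mathbf{C}' \setminus \mathbf{W}| \le \left(\frac{5}{\log_2 R\delta}\right)^m \E\,|\mathbf{C}|,
$$
where the last inequality is the content of Lemma \ref{rspread} (after the natural matching of conventions so that the bounded quantity measures how much of $\mathbf{C}'$ lies outside $\mathbf{W}$).

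Finally, on the event $\{\mathbf{C}' \subset \mathbf{W}\} \cap \{\mathbf{C}' \in \C\}$, the set $\mathbf{C}'$ itself is a member of $\C$ contained in $\mathbf{W}$, so this event is contained in $\{\exists\, C \in \C :\ C \subset \mathbf{W}\}$. Combining with the previous display gives
$$
\P\bigl(\exists\, C \in \C :\ C \subset \mathbf{W}\bigr) \ge \P\bigl(\mathbf{C}' \subset \mathbf{W}\bigr) \ge 1 - \left(\frac{5}{\log_2 R\delta}\right)^m \E\,|\mathbf{C}|,
$$
which is the claimed bound. There is no substantive obstacle here: the corollary is essentially a one-line Markov-type consequence of Lemma \ref{rspread}. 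The one point worth emphasizing is that the coupling produces a random set supported on $\C$, so controlling its probability of lying inside $\mathbf{W}$ immediately controls the probability that $\mathbf{W}$ contains some member of $\C$; without this remark, the bound on $\E\,|\mathbf{C}' \setminus \mathbf{W}|$ would say nothing about $\C$ itself.
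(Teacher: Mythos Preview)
Your argument is correct and is exactly the intended one: the paper gives no separate proof of the corollary, treating it as immediate from Lemma~\ref{rspread} via Markov's inequality, which is precisely what you do. Your parenthetical remark is also on point---the inequality in Lemma~\ref{rspread} as printed should bound $\E|\mathbf{C}'\setminus\mathbf{W}|$ rather than $\E|\mathbf{C}'\cap\mathbf{W}|$ (otherwise neither the corollary nor its application in Lemma~\ref{small} would follow), and you have interpreted it correctly.
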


\begin{proof}[Proof of Lemma \ref{small}]

Denote by $X$ the ground set of $\A$. 
Put $C = 2048$, $R=\frac{n}{2}$ and $m = \lceil \log_2{n}+10\rceil$. Let $\delta = \frac{C}{n}$ and let ${\bf U} \subset X$ be a subset of $X$ of density $(1-\delta)^m$. Let ${\bf T} \in \T_{\le n}(\A)$ be a random set with distribution 
\begin{equation}\label{distri}
\P({\bf T} = T) = \frac{n^{-|T|}}{c_n(\A)},    
\end{equation}
where $T \in \T_{\le n}(\A)$ and such that ${\bf T}$ is independent from $\bf U$.

Let us suppose that the random set $\bf T$ is not $R$-spread. By definition, this means that there is a non-empty set $S \subset X$ such that
$$
\P(S \subset {\bf T}) \ge R^{-|S|} = \left( \frac{2}{n} \right)^{|S|}.
$$
Let $\A' = \A(\bar S)$ be the family of $A \in \A$ such that $A \cap S = \emptyset$. Note that $\tau(\A')\ge \tau(\A) - |S|$. Note that if a covering $T \in \T_{\le n}(\A)$ satisfies $S \subset T$ then $T\setminus S$ is a minimal covering of the family $\A'$. Thus, 
$$
\sum_{T \in \T_{\le n}(\A):~ S \subset T} n^{-|T|} = n^{-|S|} \sum_{T \in \T_{\le n}(\A): ~S \subset T} n^{-|T \setminus S|} \le n^{-|S|} c_n(\A').
$$
By (\ref{distri}), the left hand side of this inequality equals to $c_n(\A) \P(S \subset {\bf T})$. We conclude
\begin{align*}
    c_n(\A') n^{-|S|} \ge c_n(\A) \P(S \subset {\bf T}) \ge c_n(\A) 2^{|S|} n^{-|S|},\\
    c_n(\A') \ge c_n(\A) 2^{|S|} \ge c_n(\A) 2^{\tau(\A) - \tau(\A')}.
\end{align*}
This implies the second alternative of Lemma \ref{small2}. So we may assume that $\bf T$ is $R$-spread.

By Corollary \ref{cors} (applied to ${\bf W} = X\setminus {\bf U}$), we have the following estimate on the probability that there is a covering $T \in \T_{\le n}(\A)$ which does not intersect $\bf U$:
$$
\P(\exists T \in \T_{\le n}(\A):~ T \cap {\bf U} = \emptyset) \ge 1 - \left(  \frac{5}{\log R\delta} \right)^m \E |{\bf T}| \ge 1 - 2^{-m} n > 0.9,
$$
here we used the fact that $R\delta = 1024$, $m > \log_2 n + 9$ and that every element of $\T_{\le n}(\A)$ has size at most $n$.\footnote{In fact, this is the only place in the argument where we need this restriction on the sizes of the coverings.} 
We conclude that if we take a random set $\bf U$ of density $(1-\delta)^m$ then with probability at least $0.9$ there is a $T \in \T_{\le n}(\A)$ which does not intersect $\bf U$. 
Let us now show that with probability at least $0.5$ the set $\bf U$ contains an element of $\B$, provided that $\B$ is large enough. Since by definition of $\T_{\le n}(\A)$ every $T \in \T_{\le n}(\A)$ intersects every set from $\B$, this will lead to a contradiction if $|\B|$ is large enough.

Note that an element $A$ of $\B$ is contained in $\bf U$ with probability 
\begin{equation}\label{rh}
    (1-\delta)^{m n} = e^{n \log_2 n (-\frac{C}{n} + O(n^{-2}))} = n^{-C/\ln 2 + o(1)},
\end{equation}
provided that $n$ is sufficiently large. Denote $\rho = (1-\delta)^{n m}$. 
For $A \in \B$ denote by $\xi_A$ the indicator of the event that $A \subset \bf U$ and by $\xi$ the sum of $\xi_A$ over $\B$. Hence, we have $\E \xi_A = \rho$ for every $A \in \B$ and
$$
\E \xi = |\B| \rho. 
$$
By Chebyshev's inequality (see, for instance, \cite[Page 303, (21.2)]{J}), it is enough to show that ${\rm Var}\, \xi < (\E \xi)^2 / 2$, where ${\rm Var}\, \xi$ denotes the variance of the random variable $\xi$. Let us estimate the correlations $(\E \xi_A \xi_{A'} - \rho^2)$ for $A \neq A'$. It is clear that 
$$
\E \xi_A \xi_{A'} = (1-\delta)^{m|A\cup A'|} \le (1-\delta)^{2 m n - m\frac{n}{10^4 \ln n} } = \rho^{2 - \frac{1}{10^4 \ln n}}.
$$
By (\ref{rh}), we have
$$
\rho^{-\frac{1}{10^4 \ln n}} = \left(n^{\frac{C + o(1)}{\ln 2} }\right)^{- \frac{1}{10^4 \ln n}} = 2^{\frac{C}{10^4} + o(1)} < 1.4
$$
provided that $n$ is large enough. We conclude that the variance of $\xi$ is at most
$$
0.4 \rho^2 |\B|^2 + \rho |\B|
$$
which is less than $(\E\xi)^2/2$ if $|\B| > 10/\rho$. Therefore, provided, that $|\B| \ge n^{3000} > 10/\rho$, with probability at least $0.5$ the random set $\bf U$ contains an element of $\B$ and with probability at least $0.9$ it does not intersect an element of $\T_{\le n}(\A)$. But these two events cannot happen simultaneously. This is a contradiction and Lemma \ref{small2} is proved. 
\end{proof}

\subsection{Moments of the degree function}

In this section we show that if we have an $n$-uniform family $\A$ such that $\tau(\A)$ is ``large" but $|\A|$ is ``small" then the $l$-wise intersections of sets from $\A$ are very small on average. More precisely, we will prove the following:

\begin{lemma}\label{dec2}
Let $n\ge 1$ and $m, t \ge 1$. Let $\A$ be an $n$-uniform family of size at most $e^{m}$ and $\tau(\A) \ge t$. Then, for every $l \ge 1$, there is a subfamily $\A' \subset \A$ such that $\tau(\A \setminus \A') \le t/2$ and for every $i = 1, \ldots, l$ we have
\begin{equation*}
    \E_{A_1, \ldots, A_i \in \A'} |A_1 \cap \ldots \cap A_i| \le C_l \left(\frac{m}{t} \right)^{i-1} n,
\end{equation*}
where $C_l \ll 2^{l^2}$ depends only on $l$ and the average is taken over all $A_1, \ldots, A_l \in \A$ chosen uniformly and independently.
\end{lemma}

Let $X$ denote the ground set of an $n$-uniform family $\F$. For a function $f: X \rightarrow \R_+$ and $S \subset X$ we denote by $f(S)$ the sum $\sum_{x\in S}f(x)$. 

\begin{obs}\label{sa}
For any non-zero function $f: X \rightarrow \R_+$ and any family $\F$ on $X$ we have
\begin{equation}\label{dis}
    \sum_{F \in \F} \left(1 - \frac{f(F)}{f(X)}  \right)^{\tau(\F)-1} \ge 1.
\end{equation}
In particular, for any $f: X \rightarrow \R_+$ there always exists $F \in \F$ such that
$$
f(F) \le f(X) (1 - |\F|^{-1/(\tau(\F)-1)}).
$$
\end{obs}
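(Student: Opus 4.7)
The plan is to prove the inequality (\ref{dis}) via a probabilistic argument and then deduce the "in particular" statement by a pigeonhole-style averaging.

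Set $t = \tau(\F) - 1$. Since $f$ is a non-zero, non-negative function, the quantity $p(x) := f(x)/f(X)$ defines a probability distribution on $X$. I would sample $t$ elements $x_1, \ldots, x_t$ independently according to $p$, and let $S = \{x_1, \ldots, x_t\}$. Then $|S| \le t < \tau(\F)$, so $S$ cannot be a cover of $\F$; equivalently, with probability $1$ there exists some $F \in \F$ disjoint from $S$.

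Now I would apply the union bound. For a fixed $F \in \F$, the probability that $F \cap S = \emptyset$ is exactly
$$
\prod_{i=1}^{t} \P(x_i \notin F) = \left(1 - \frac{f(F)}{f(X)}\right)^{t},
$$
so
$$
1 = \P(\exists F \in \F : F \cap S = \emptyset) \le \sum_{F \in \F} \left(1 - \frac{f(F)}{f(X)}\right)^{\tau(\F)-1},
$$
which is exactly (\ref{dis}).

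For the second statement, since the sum of $|\F|$ non-negative terms is at least $1$, the largest term is at least $1/|\F|$: there is some $F \in \F$ with $(1 - f(F)/f(X))^{\tau(\F)-1} \ge 1/|\F|$. Rearranging gives $f(F) \le f(X)\bigl(1 - |\F|^{-1/(\tau(\F)-1)}\bigr)$, as required. There is no real obstacle here — the only ingredient beyond the definition of $\tau(\F)$ is the union bound — so the whole argument is essentially a one-line probabilistic computation once the right sampling distribution is identified.
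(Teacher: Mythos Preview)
Your proof is correct and follows essentially the same approach as the paper: sample $t=\tau(\F)-1$ points independently from the distribution proportional to $f$ and use that the resulting set cannot cover $\F$. The paper phrases the final step via linearity of expectation (the left hand side of (\ref{dis}) is the expected number of uncovered sets, which is always at least $1$) rather than the union bound, but this is the same computation.
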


\begin{proof}
Put $t = \tau(\F) - 1$ and let $x_1, \ldots, x_t \in X$ be a sequence of random independent elements of $X$ sampled according to distribution $f$.
Then the left hand side of (\ref{dis}) is the expectation of the number of sets $F \in \F$ which are not covered by the set $\{x_1, \ldots, x_t\}$. Since $\tau(\F) > t$, this number is always positive and (\ref{dis}) follows.
\end{proof}

The following variant of this observation will be slightly more convenient to use.

\begin{cor}\label{mu}
Let $f_1, \ldots, f_l: X \rightarrow \R_+$ be arbitrary non-zero functions and $\F$ be an arbitrary family on $X$. Then there exists $F \in \F$ such that $f_i(F) \le f_i(X) l (1 - |\F|^{-1/(\tau(\F)-1)})$ for any $i = 1, \ldots, l$.
\end{cor}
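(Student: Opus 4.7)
The plan is to reduce Corollary \ref{mu} to Observation \ref{sa} by considering a single normalized test function that simultaneously controls all the $f_i$'s. Specifically, I would define
$$
f := \sum_{i=1}^{l} \frac{f_i}{f_i(X)},
$$
which is well defined and non-negative since each $f_i$ is non-zero and non-negative. By construction, $f(X) = l$.

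Next, I would apply Observation \ref{sa} to the function $f$ and the family $\F$. This yields some $F \in \F$ such that
$$
f(F) \le f(X)\bigl(1 - |\F|^{-1/(\tau(\F)-1)}\bigr) = l\bigl(1 - |\F|^{-1/(\tau(\F)-1)}\bigr).
$$

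Finally, since every summand in the definition of $f$ is non-negative, for each $i = 1, \ldots, l$ we have
$$
\frac{f_i(F)}{f_i(X)} \le \sum_{j=1}^{l} \frac{f_j(F)}{f_j(X)} = f(F) \le l\bigl(1 - |\F|^{-1/(\tau(\F)-1)}\bigr),
$$
which rearranges exactly to the desired bound $f_i(F) \le f_i(X)\, l\,(1 - |\F|^{-1/(\tau(\F)-1)})$. There is no serious obstacle here; the only ``trick'' is to normalize each $f_i$ before summing so that the single-function bound from Observation \ref{sa} specializes simultaneously to all $i$.
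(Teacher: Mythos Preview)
Your proof is correct and is exactly the paper's own argument: define $f=\sum_{i=1}^l f_i/f_i(X)$, apply Observation \ref{sa} to this single $f$, and use non-negativity to bound each normalized summand by $f(F)$.
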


\begin{proof}
Apply Observation \ref{sa} to $f(x) = \sum_{i = 1}^l \frac{f_i(x)}{f_i(X)}$.
\end{proof}

For a family $\F$ on the ground set $X$ let $d_\F: X \rightarrow \R_+$ be the degree function of the family $\F$, that is, if $x \in X$ then $d_\F(x)$ equals to the number of sets $F \in \F$ which contain $x$. Let $d_\F^l: X \rightarrow \R_+$ denote the $l$-th power of $d_\F$, i.e. $d_\F^l(x) = (d_\F(x))^l$. By abusing notation, we also denote by $d^l_\F$ the number $d^l_\F(X)$.  

\begin{obs}
For any family $\F$ and any $l \ge 1$ we have the following identity
$$
d_\F^l |\F|^{-l} = \E_{F_1, \ldots, F_l \in \F} |F_1 \cap \ldots \cap F_l|,
$$
where $F_1, \ldots, F_l$ are taken from $\F$ uniformly and independently.
\end{obs}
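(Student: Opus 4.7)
The plan is to prove this by a direct double-counting / expansion argument, essentially expanding the $l$-th power of the degree function and switching the order of summation. There is no real obstacle here; the statement is essentially a restatement of the identity $\sum_x \mathbb{1}[x \in F_1 \cap \dots \cap F_l] = |F_1 \cap \dots \cap F_l|$ combined with the distributive law.

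More concretely, I would first note that by definition, for each $x \in X$,
$$
d_\F(x) = \sum_{F \in \F} \mathbb{1}[x \in F].
$$
Raising this to the $l$-th power and expanding the product gives
$$
d_\F^l(x) = \sum_{F_1, \ldots, F_l \in \F} \prod_{i=1}^{l} \mathbb{1}[x \in F_i] = \sum_{F_1, \ldots, F_l \in \F} \mathbb{1}[x \in F_1 \cap \ldots \cap F_l].
$$

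Next I would sum over $x \in X$ and swap the order of summation. Using the fact that $\sum_{x \in X} \mathbb{1}[x \in F_1 \cap \ldots \cap F_l] = |F_1 \cap \ldots \cap F_l|$, this yields
$$
d_\F^l = d_\F^l(X) = \sum_{F_1, \ldots, F_l \in \F} |F_1 \cap \ldots \cap F_l|.
$$
Finally, dividing both sides by $|\F|^l$ and recognizing the right-hand side as an expectation over an $l$-tuple of independent uniform samples from $\F$ gives the required identity. The only ``step'' worth any care is the expansion of the power and swap of summations, both of which are routine; no inequality or auxiliary lemma is needed.
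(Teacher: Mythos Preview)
Your argument is correct; the paper states this observation without proof, and your double-counting expansion of $d_\F^l(x)$ followed by summing over $x$ is exactly the standard way to justify it.
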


Applying Corollary \ref{mu} to functions $d_\F^1, \ldots, d_\F^l$ we obtain the following result.

\begin{lemma}\label{in}
Let $l, t \ge 1$, let $\F \subset \A$ be a subfamily of a family $\A$ such that $\tau(\A \setminus \F) \ge t+1$. Then there exists $A \in \A \setminus \F$ such that the following holds. Denote $\F' = \F \cup \{A\}$, then for any $i = 1, \ldots, l$ we have:
\begin{equation}\label{req}
    d_{\F'}^i \le d_\F^i + \left(\frac{l \log |\A|}{t}\right) 2^i d_\F^{i-1} + n.
\end{equation}
\end{lemma}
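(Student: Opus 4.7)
The plan is to expand $d_{\F'}^i - d_\F^i$ as a sum over $x \in A$ and then invoke Corollary \ref{mu} to choose $A \in \A \setminus \F$ appropriately. Since $d_{\F'}(x) = d_\F(x) + 1$ for $x \in A$ and $d_{\F'}(x) = d_\F(x)$ otherwise,
$$
d_{\F'}^i - d_\F^i \;=\; \sum_{x \in A}\bigl[(d_\F(x)+1)^i - d_\F(x)^i\bigr].
$$
The binomial expansion $(d+1)^i - d^i = \sum_{k=0}^{i-1}\binom{i}{k}d^k$ yields the pointwise bound $(d+1)^i - d^i \le 2^i d^{i-1}$ for $d \ge 1$ (each $d^k$ is at most $d^{i-1}$, and $\sum_k \binom{i}{k} \le 2^i$), while $(d+1)^i - d^i = 1$ for $d = 0$. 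Since at most $|A| = n$ points $x \in A$ have $d_\F(x) = 0$, this gives
$$
d_{\F'}^i - d_\F^i \;\le\; 2^i\, d_\F^{i-1}(A) + n.
$$
The $i = 1$ case of (\ref{req}) is automatic: $d_{\F'}^1 - d_\F^1 = n$ while the right-hand side of (\ref{req}) is at least $d_\F^1 + n$. For $i \ge 2$ it remains to find $A \in \A \setminus \F$ for which $d_\F^{i-1}(A)$ is small uniformly in $i$.

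Such an $A$ is produced directly by Corollary \ref{mu} applied to the family $\A \setminus \F$ and the $l-1$ functions $f_j = d_\F^j$ for $j = 1, \ldots, l-1$. Assuming $\F \neq \emptyset$ (so each $f_j$ is nonzero) and using $\tau(\A \setminus \F) - 1 \ge t$ together with $|\A \setminus \F| \le |\A|$ and the inequality $1 - e^{-z} \le z$, the corollary yields some $A \in \A \setminus \F$ with
$$
d_\F^j(A) \;\le\; (l-1)\bigl(1 - |\A \setminus \F|^{-1/(\tau(\A \setminus \F) - 1)}\bigr) d_\F^j \;\le\; \frac{l\log|\A|}{t}\, d_\F^j
$$
for every $j = 1, \ldots, l-1$. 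Substituting $j = i - 1$ into the earlier bound on $d_{\F'}^i - d_\F^i$ gives (\ref{req}).

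The only remaining issue is the edge case $\F = \emptyset$, where the functions $d_\F^j$ vanish and Corollary \ref{mu} does not formally apply; but then $d_\F^i = d_\F^{i-1} = 0$ for $i \ge 2$, so (\ref{req}) reduces to $d_{\F'}^i \le n$, which holds with equality since $\F' = \{A\}$ yields $d_{\F'}^i = |A| = n$. I do not anticipate any serious obstacle beyond this: the argument is essentially a short binomial estimate combined with a direct application of Corollary \ref{mu}, the factor $2^i$ being the combinatorial cost of differencing an $i$-th power and the additive $n$ absorbing the at-most-$n$ points of $A$ on which $d_\F$ vanishes.
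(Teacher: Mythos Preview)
Your proof is correct and follows essentially the same route as the paper's. The only cosmetic difference is the order of operations: the paper first packages the binomial increments into functions $f_i(x)=\sum_{j=1}^{i-1}\binom{i}{j}d_\F^j(x)$, applies Corollary~\ref{mu} to $f_1,\ldots,f_l$, and only afterwards bounds $f_i(X)\le 2^i d_\F^{i-1}$ using the global monotonicity of $d_\F^j$ in $j$; you instead apply the pointwise bound $(d+1)^i-d^i\le 2^i d^{i-1}$ first and then invoke Corollary~\ref{mu} directly on $d_\F^1,\ldots,d_\F^{l-1}$. Your ordering is arguably a touch cleaner, since it sidesteps the fact that $f_1\equiv 0$ is not literally a ``non-zero function'' as Corollary~\ref{mu} demands, and your explicit treatment of the edge case $\F=\emptyset$ is a nice bit of hygiene the paper omits.
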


\begin{proof}
For $i = 1, \ldots, l$ let 
\begin{equation}\label{f}
    f_i(x) = \sum_{j = 1}^{i-1} {i \choose j} d_\F^j(x).
\end{equation}
Apply Corollary \ref{mu} to functions $f_1, \ldots, f_l$ and the family $\A \setminus \F$. Then there exists $A \in \A\setminus \F$ such that for every $i = 1, \ldots, l$ we have 
\begin{equation}\label{ineq}
    f_i(A) \le f_i(X) l (1 - |\A \setminus \F|^{-1/\tau(\A\setminus \F)-1}) \le f_i(X) l(1 - |\A|^{-1/t}) \le f_i(X) \frac{l \log |\A|}{t},
\end{equation}
by the standard inequality $1-e^{-x} \le x$. But note that for $\F' = \F \cup \{A\}$ by (\ref{f}) we have 
\begin{equation}\label{dmd}
    d_{\F'}^i - d_{\F}^i = \sum_{x \in X} d_{\F'}^i(x) - d_\F^i(x) = \sum_{x \in A} (d_{\F}(x) + 1 )^i - d_\F^i(x) = f_i(A) + n.
\end{equation}
Note that $d_\F^i$ is monotone increasing in $i$ and so
$$
f_i(X) = \sum_{j = 1}^{i-1} {i \choose j} d_\F^j \le 2^i d_\F^{i-1}.
$$
The bound (\ref{req}) now follows from (\ref{ineq}) and (\ref{dmd}).
\end{proof}

Now we are ready to prove Lemma \ref{dec2}.

\begin{proof}[Proof of Lemma \ref{dec2}]

Let $X$ denote the ground set of $\A$ and put $\gamma = 2l m /t$.

Let $\F \subset \A$ be a maximal subfamily in $\A$ such that for every $i = 1, \ldots, l$ we have
\begin{equation}\label{max}
    d_\F^i \le 2^{i^2}\gamma^{i-1} n |\F|^i + 2^{i^2}n|\F|.
\end{equation}
Note that if $|\F| = 1$ then (\ref{max}) clearly holds and so $\F$ is well-defined. To prove Lemma \ref{dec} it is clearly enough to show that any such $\F$ satisfies $\tau(\A \setminus \F) \le t / 2$. Indeed, in this case we have $\tau(\F) \ge t/2$ and, in particular, $|\F| \ge t/2$.  
Then $\gamma |\F| \ge m \ge 1$ and, therefore, the first term in (\ref{max}) dominates the second one.

Now we show that it is impossible to have $\tau(\A \setminus \F) \ge t/2 + 1$. Indeed, in this case we can apply Lemma \ref{in} to the pair $\F \subset \A$ and obtain a family $\F' = \F \cup \{ A \}$ such that (\ref{req}) holds for $i = 1, \ldots, l$ and with $t/2$ instead of $t$. Note that $l \log |\A|/(t/2) \le \gamma$. On the other hand, the maximality of $\F$ implies that there is some $i \in \{2, \ldots, l\}$ ($i \neq 1$ because otherwise (\ref{max}) holds automatically) such that 
$$
d^i_{\F'} > 2^{i^2} \gamma^{i-1}n(|\F|+1)^i + 2^{i^2}n(|\F|+1) \ge 2^{i^2}\gamma^{i-1} n|\F|^i + 2^{i^2}\gamma^{i-1}n|\F|^{i-1} + 2^{i^2}n|\F| + 2^{i^2}n.
$$
On the other hand, from (\ref{req}) we get
\begin{align*}
    d^i_{\F'} \le d_\F^i + \gamma 2^i d_\F^{i-1} + n \le  (2^{i^2}\gamma^{i-1} n|\F|^i + 2^{i^2}n|\F|) + \gamma 2^i( 2^{(i-1)^2}\gamma^{i-2} n|\F|^{i-1} + 2^{(i-1)^2} n |\F|) + n.
\end{align*}
Combining these two inequalities and cancelling same terms we get
$$
(2^{i^2} - 2^{i^2 - i + 1})\gamma^{i-1}n|\F|^{i-1} - \gamma 2^{i^2 - i + 1} n|\F| + (2^{i^2} - 1)n < 0.
$$
So if we let $x = \gamma |\F|$ then, after dividing by $2^{i^2}n$, we obtain
\begin{equation}\label{ss}
2^{-i+1}x > (1 - 2^{-i+1})x^{i-1}+\frac{1}{2}.
\end{equation}
Recall that $i \ge 2$. So if $x \ge 1$ then the first term on the right hand side (\ref{ss}) is greater than $2^{-i+1}x$. If $x < 1$ then the second term is greater than $2^{-i+1}x$. In both cases we arrive at a contradiction. Lemma \ref{dec} is proved.
\end{proof}

\subsection{Bounded degree families}\label{sec25}

In this section we consider intersecting families of bounded degree. In fact, this is essentially the only place in the paper where we use the fact that the family is intersecting. The idea to consider low degree families in the Erd{\H o}s--Lov{\' a}sz problem also appears in \cite[Section 2]{F}.

\begin{lemma}\label{lbodeg2}
Let $n \ge 1$ and $r \ge 2l$ be such that $r^2 \le l^3 n$. Let $\B$ be an $n$-uniform intersecting family of size $r$ such that every $l$ distinct sets from $\B$ have an empty intersection. Then 
\begin{equation}\label{lel}
    c_n(\B) \le e^{-\frac{r^2}{10 l^3 n}}.
\end{equation}
\end{lemma}


\begin{proof}[Proof of Lemma \ref{lbodeg2}]
In order to prove this lemma, we need to recall the classical Er{\H o}s--Lov{\' a}sz encoding procedure which they used to obtain the bound $|\F| \le n^n$ for the size of an $n$-uniform maximal intersecting family.
Denote $\B = \{F_1, \ldots, F_r\}$. 

\paragraph{Procedure.} Let $T \in \T_{\le n}(\B)$ and $S \subset T$ be a proper subset. From the pair $(T, S)$ we construct a new pair $(T, S')$ as follows. Let $i \in [r]$ be the minimum number so that $F_i \cap S = \emptyset$. Pick arbitrary $x \in F_i \cap T$ and let $S' = S \cup \{x\}$. 

So if we apply this procedure to any $T \in \T_{\le n}(\B)$ and $S = \emptyset$ then we will obtain a sequence of sets of the form:
\begin{equation}\label{seq}
    \emptyset = S_0 \subset S_1 \subset \ldots \subset S_{|T|} = T.
\end{equation}

Note that the sequence $(S_0, \ldots, S_{|T|})$ is not determined uniquely by $T$ since there may be an ambiguity in the choice of $x \in F_i \cap T$ during the procedure. Let $\T_1 \subset \T_{\le n}(\B)$ the the family of sets $T$ such that the sequence $(S_0, \ldots, S_{|T|})$ is determined uniquely by $T$. In other words, at each step we have an equality $|F_i \cap T| = 1$. Let $\T_2 = \T_{\le n}(\B) \setminus \T_1$.

Now we denote by $\mathcal J$ the set of all sequences $(S_0, S_1, \ldots, S_k)$ which may occur during the procedure starting from some $T \in \T_{\le n}(\B)$ and $S = \emptyset$. Let $\J = \J_1 \cup \J_2$ be the decomposition arising from the decomposition $\T_{\le n}(\B) = \T_1 \cup \T_2$. The weight $w(\bar S)$ of a sequence $\bar S = (S_0, \ldots, S_k)$ is defined to be $n^{-|S_k|}$. The standard Erd{\H o}s--Lov{\' a}sz \cite{EL} argument shows that the weight $w(\J)$ of the family $\J$ is always at most $1$. We omit the proof since it is very similar in spirit to the proof of Lemma \ref{lm} and Corollary \ref{st}. 

On the other hand, we can bound weights of families $\T_1$ and $\T_2$ in terms of weights of $\J_1$ and $\J_2$ as follows:
\begin{equation}\label{cj}
    c_n(\B) = w_n(\T_{\le n}(\B)) = w_n(\T_1) + w_n(\T_2) \le w(\J_1) + \frac{1}{2} w(\J_2) \le \frac{w(\J_1) + 1}{2}.
\end{equation}
So it is enough to obtain a good upper bound on $w(\J_1)$. For $T \in \T_1$ we denote by $S_i(T)$ the $i$-th element of the sequence of $T$ in the process (which is defined uniquely for elements of $\T_1$). We denote by $A_i(T) \in \B$ the element of $\B$ which was picked at step $i-1$ of the process. In particular, $S_{i-1}(T) \cap A_i(T) = \emptyset$ and $|S_{i}(T) \cap A_i(T)| = 1$. We denote by $x_i(T)$ the unique element in the intersection $S_{i}(T) \cap A_i(T)$. 

The uniqueness of the sequence $\bar S(T)$ implies that for any $j < i$ we have
$$
x_i(T) \not \in A_j(T).
$$
Indeed, otherwise at step $j$ we may have picked the element $x_i(T)$ instead of $x_j(T)$ and thus form a different sequence $(S_0', \ldots, S'_{|T|})$ which corresponds to the covering $T$. We conclude that
$$
x_i(T) \in A_i(T) \setminus \bigcup_{j < i} A_j(T) =: Y_i(T).
$$
Since the family $\B$ is intersecting and does not contain $l$-wise intersections we have the following upper bound on the size of $Y_i(T)$:
$$
|Y_i(T)| \le n - \frac{i-1}{l}.
$$
For $q \ge 0$ and a given sequence $\bar S = (S_0 \subset S_1 \subset \ldots \subset S_q)$ we denote by $\J_1(\bar S)$ the family of sequences from $\J_1$ which start from $\bar S$. 

\begin{obs}\label{obb}
For any sequence $\bar S = (S_0, S_1, \ldots, S_{i-1})$ which is a part of the sequence of some $T\in \T_1$ such that $|T| > i$ we have 
$$
w(\J_1(\bar S)) \le \frac{1}{n} \sum_{x \in Y_i(T)} w(\J_1(\bar S, S_{i-1} \cup \{x\})).
$$
\end{obs}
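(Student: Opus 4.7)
The plan is to prove Observation \ref{obb} by a direct partition of the family $\J_1(\bar S)$ according to the set added at step $i$ of the encoding procedure.

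First, I would verify that $\bar S$ cannot itself be a complete sequence of any $C'\in\C_1$. By hypothesis there is a distinguished $C\in\C_1$ with $|C|>i$ whose sequence strictly extends $\bar S$, so at step $i$ the procedure for $C$ locates some $F_k\in\B$ disjoint from $S_{i-1}$. Hence $S_{i-1}$ does not cover $\B$ and no element of $\C_1$ can have $\bar S$ as its full sequence. Consequently every $\bar T\in\J_1(\bar S)$ is a proper extension of $\bar S$ by one further set $S_i = S_{i-1}\cup\{x\}$ with $x\notin S_{i-1}$.

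Next, I would invoke the uniqueness argument of the paragraph just preceding the statement. For every $C'\in\C_1$ whose sequence begins with $\bar S$, the element $x_i(C')$ chosen at step $i$ must lie in $Y_i(C') = A_i(C')\setminus\bigcup_{j<i}A_j(C')$, for otherwise the procedure for $C'$ would not produce a unique sequence. Crucially, each $A_j(C')$ with $j\le i$ is determined by the prefix $\bar S$ alone (it is the least-indexed member of $\B$ disjoint from $S_{j-1}$), so $Y_i(C')$ is independent of $C'$ and coincides with $Y_i(C)$. This yields the disjoint decomposition
$$
\J_1(\bar S) \;=\; \bigsqcup_{x\in Y_i(C)}\J_1(\bar S, S_{i-1}\cup\{x\}).
$$

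Summing the weight $w(\bar T) = n^{-|S_k|}$ across this partition, together with the one-step scaling $w(\bar S, S_{i-1}\cup\{x\}) = n^{-1}w(\bar S)$ built into the weight definition, then delivers the stated bound. The only delicate point to emphasise is the $C$-independence of $Y_i(C)$ noted above: this is exactly what makes the right hand side of the claim well-defined, and it is what will allow the recursion to be iterated cleanly later in the proof of Lemma \ref{lbodeg} to accumulate the product $\prod_i |Y_i|/n$ that eventually gives the desired exponential bound on $c_n(\B)$.
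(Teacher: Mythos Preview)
Your argument is correct and follows exactly the paper's approach: decompose $\J_1(\bar S)$ according to the element added at step $i$, observe that this element must lie in $Y_i(C)$, and sum over the resulting pieces. The paper's own proof is a single sentence to this effect; your added remarks that $\bar S$ cannot itself be terminal and that $Y_i(C)$ depends only on the prefix $\bar S$ (not on the particular $C$) make explicit points the paper leaves to the reader.
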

\begin{proof}
Indeed, the observation says that a sequence $\bar S$ can be extended only by the elements of the set $Y_i(T)$ and, therefore, its weight is bounded by the sum of the weights of all possible extensions.
\end{proof}
For $q \ge 0$ let 
\begin{equation}\label{sup}
    f(q) = \max_{\bar S = (S_0, S_1, \ldots, S_q)} w(\J_1(\bar S)).
\end{equation}
The following proposition will finish the proof. Note that $\tau(\B) \ge r/l$ because any element $x \in X$ covers at most $l$ sets from $\B$.

\begin{prop}\label{recu}
For any $q \in [0, r/l]$ we have
$$
f(q) \le \prod_{i = q}^{[r/l]-1} \left (1 - \frac{i-1}{n l}\right).
$$
\end{prop}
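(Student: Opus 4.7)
The plan is to prove Proposition \ref{recu} by downward induction on $q$, starting from $q = [r/l]$ and decreasing to $q = 0$. The base case $q = [r/l]$ gives an empty product, so we must show $f([r/l]) \le 1$; this is immediate from $\J_1(\bar S) \subseteq \J_1 \subseteq \J$ and the Erdős--Lovász bound $w(\J) \le 1$ invoked just before the statement.

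For the inductive step, fix any $\bar S = (S_0, \ldots, S_q)$ achieving the maximum in the definition of $f(q)$. If $\bar S$ is not a proper prefix of any full $\C_1$-trajectory, then $\J_1(\bar S)$ contains at most $\bar S$ itself and $w(\J_1(\bar S)) \le n^{-q}$, which is comfortably within the desired bound. Otherwise, apply Observation \ref{obb} at step $i = q+1$:
\begin{equation*}
w(\J_1(\bar S)) \le \frac{1}{n} \sum_{x \in Y_{q+1}} w(\J_1(\bar S, S_q \cup \{x\})) \le \frac{|Y_{q+1}|}{n} f(q+1),
\end{equation*}
where the second inequality uses the inductive hypothesis applied to each length-$(q+2)$ extension $\bar S_x$. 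Plugging in the size bound $|Y_{q+1}| \le n - q/l$ recorded just before the statement gives
\begin{equation*}
w(\J_1(\bar S)) \le \left(1 - \frac{q}{nl}\right) f(q+1) \le \left(1 - \frac{q-1}{nl}\right) f(q+1),
\end{equation*}
and combining with the inductive bound on $f(q+1)$ yields $w(\J_1(\bar S)) \le \prod_{i=q}^{[r/l]-1}(1 - (i-1)/(nl))$. Taking the maximum over $\bar S$ closes the induction.

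The main thing to watch is that Observation \ref{obb} formally requires $|C| > i = q+1$ for some $C \in \C_1$ extending $\bar S$. For $q < [r/l] - 1$ this holds automatically since every $C \in \C_1$ satisfies $|C| \ge \tau(\B) \ge r/l$. At the boundary $q = [r/l] - 1$, trajectories of length exactly $[r/l]$ may instead be already complete at $\bar S$, but such trajectories contribute only $n^{-q}$ to $w(\J_1(\bar S))$ and are easily absorbed into the target product. So all the real work is packaged into Observation \ref{obb} and the degree-based bound on $|Y_i|$; the induction itself is essentially bookkeeping.
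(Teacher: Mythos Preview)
Your proposal is correct and follows essentially the same approach as the paper: downward induction on $q$, with the base case $f([r/l])\le 1$ coming from the Erd{\H o}s--Lov{\'a}sz weight bound and the step coming from Observation~\ref{obb} together with $|Y_{q+1}|\le n-q/l$. You are in fact more careful than the paper about the boundary cases (non-extendable prefixes and $q=[r/l]-1$) and about the index shift needed to match the factor $1-(q-1)/(nl)$ in the product, but these are refinements of the same argument rather than a different route.
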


\begin{proof}
The proof is by induction. The base case $q = [r/l]$ states that $f([r/l]) \le 1$ which we already know by the Erd{\H o}s--Lov{\' a}sz argument.

For the induction step, let $T \in \T_1$ be a covering on which the maximum in (\ref{sup}) is attained. Now apply Observation \ref{obb} and the induction hypothesis to conclude that
$$
f(q) \le \frac{1}{n} |Y_i(T)| f(q+1) \le \left(1 - \frac{i-1}{nl} \right)f(q+1),
$$
where $T$ corresponds to a maximizer of the supremum on the left hand side.
\end{proof}

Substituting $q = 0$ in Proposition \ref{recu} we get
$$
w(\J_1) = f(0) \le \prod_{i = 1}^{[r/l]-1} (1 - \frac{i-1}{n l}) \le \left(  1 - \frac{r}{2n l^2}  \right)^{r/2l} \le e^{ - \frac{r^2}{4l^3 n}}.
$$
Let $y = \frac{r^2}{l^3 n}$. By assumption we have $y \le 1$ and so we have the following elementary inequality: $e^{-y/4}+1 \le 2 e^{-y/10}$. By (\ref{cj}), the desired inequality (\ref{lel}) follows.
\end{proof}

The following simple corollary will be more convenient to combine with Lemma \ref{crlm} in the proof of Theorem \ref{el}.

\begin{cor}\label{corbd}
Let $n \ge 1$ and $r \ge 2l$ be such that $r^2 \le l^3 n$. Let $\B$ be an $n$-uniform intersecting family of size $r$ such that every $l$ distinct sets from $\B$ have an empty intersection. Then for $k \le \frac{r}{20 l^3}$ we have
\begin{equation*}
    c_{n - k}(\B) \le 1.
\end{equation*}
\end{cor}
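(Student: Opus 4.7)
My plan is to combine the exponential bound from Lemma~\ref{lbodeg} with the elementary observation that every minimal cover of $\B$ has at most $r$ elements. I would first prove this size bound: for each $x \in C \in \C(\B)$, minimality means there exists some $B_{i(x)} \in \B$ with $B_{i(x)} \cap C = \{x\}$ (otherwise $C \setminus \{x\}$ would still be a cover), and the map $x \mapsto B_{i(x)}$ is an injection from $C$ into $\B$, giving $|C| \le r$.

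Given this, I would rewrite each term as $(n-k)^{-|C|} = n^{-|C|}(n/(n-k))^{|C|}$ and use $|C| \le r$ to bound $(n/(n-k))^{|C|} \le (n/(n-k))^r$, which by Lemma~\ref{lbodeg} yields
$$c_{n-k}(\B) \le \left(\frac{n}{n-k}\right)^r c_n(\B) \le \left(\frac{n}{n-k}\right)^r \exp\!\left(-\frac{r^2}{10 l^3 n}\right).$$
The standard inequality $(1+x)^r \le e^{xr}$ gives $(n/(n-k))^r \le \exp(rk/(n-k))$, and the hypotheses $r^2 \le l^3 n$ and $k \le r/(20 l^3)$ quickly imply $k \le \sqrt{n}/(20 l^{3/2}) \le n/2$, so $k/(n-k) \le 2k/n$. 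Combining these,
$$c_{n-k}(\B) \le \exp\!\left(\frac{2rk}{n} - \frac{r^2}{10 l^3 n}\right) = \exp\!\left(\frac{r}{n}\left(2k - \frac{r}{10 l^3}\right)\right),$$
and the assumption $k \le r/(20 l^3)$ is exactly what makes the exponent non-positive, yielding $c_{n-k}(\B) \le 1$.

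I do not expect any substantive obstacle: the two inputs — minimality of $C$ forcing $|C| \le r$, and Lemma~\ref{lbodeg} providing the exponential savings $e^{-r^2/(10 l^3 n)}$ — fit together cleanly, and the constant $20$ in the hypothesis $k \le r/(20 l^3)$ is calibrated precisely so that the $(n/(n-k))^r$ blowup is exactly absorbed by the Lemma~\ref{lbodeg} factor.
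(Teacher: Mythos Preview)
Your proof is correct and follows essentially the same approach as the paper's. Both arguments combine the bound $|C|\le r$ for minimal covers with Lemma~\ref{lbodeg} to obtain $c_{n-k}(\B)\le (n/(n-k))^r e^{-r^2/(10l^3 n)}$, and then verify that the hypothesis $k\le r/(20l^3)$ makes this at most~$1$; the only difference is cosmetic bookkeeping---the paper phrases the last step via $1-\tfrac{r}{20l^3 n}\ge e^{-r/(10l^3 n)}$, whereas you go through $k/(n-k)\le 2k/n$, but these are equivalent elementary estimates.
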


\begin{proof}
Note that any minimal covering of $\B$ has size at most $|\B| = r$. So for any $\lambda \le 1$ we have
$$
c_{\lambda n}(\B) \le \lambda^{-r} c_n(\B).
$$
By Lemma \ref{lbodeg2}, if we let $\lambda = e^{- \frac{r}{10 l^3 n}}$ then $c_{\lambda n}(\B) \le 1$. Now if $k \le \frac{r}{20 l^3}$ then
$$
\frac{n-k}{n} \ge 1 - \frac{r}{20 l^3 n} \ge e^{-\frac{r}{10 l^3 n}},
$$
which implies that $c_{n-k}(\B) \le 1$.
\end{proof}

\section{Proof of Theorem \ref{main}}\label{sec3}

In this section we put all developed machinery together to prove Theorem \ref{main}. We restate the theorem below for convenience.

\begin{theorem}\label{main2}
For all $\varepsilon > 0$ and sufficiently large $n > n_0(\varepsilon)$ we have the following. Let $\A$ be an intersecting $n$-uniform family. Then
\begin{equation}\label{maine2}
c_n(\A) \le e^{1- \frac{\tau(\A)^{1.5-\varepsilon}}{n}}.
\end{equation}
\end{theorem}

Now we begin the proof of Theorem \ref{main}. Fix $n > n_0(\varepsilon)$ and suppose that there exists an intersecting family $\A$ which violates (\ref{maine}). Let $\A$ be any such family of minimal possible size. In particular, $\A$ is a $\tau$-critical family and $\tau(\A) > n^{2/3}$ because otherwise the right hand side of (\ref{maine2}) is greater than 1 and so we done by Corollary \ref{st}. 

By the minimality of $\A$, for any proper subfamily $\A' \subset \A$ we have
\begin{equation}\label{apr}
    c_n(\A') \le e^{1- \frac{\tau(\A')^{1.5-\varepsilon}}{n}}.
\end{equation}
We are going to apply Lemma \ref{crlm} to various subfamilies of $\A$ and $f(t) = t^{1.5-\varepsilon} - 1$. Let $\lambda = e^{-f'(\tau(\A))} = e^{-(1.5-\varepsilon) \frac{\tau(\A)^{0.5-\varepsilon}}{n}}$ and $k =\sqrt{\tau(\A)}$. 

\begin{prop}\label{pgap}
For any $A_1, A_2 \in \A$ we have 
\begin{equation}\label{gap}
    |A_1 \cap A_2| \not \in [k, n-k].
\end{equation}
\end{prop}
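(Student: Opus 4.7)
The plan is to argue by contradiction via Lemma~\ref{crlm}. Suppose some $A_1, A_2 \in \A$ have $s := |A_1 \cap A_2| \in [k, n-k]$; I will then derive the bound (\ref{maine}) for $\A$ itself, contradicting the choice of $\A$ as a minimal counterexample.

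Set $\lambda := e^{-n^{-1/2-\varepsilon}}$ and $T := e^{n^{1/2-\varepsilon}/2}$, so that $T\lambda^{\tau} = e^{-(\tau - n/2)n^{-1/2-\varepsilon}}$ and the target bound (\ref{maine}) reads $c_n(\A) \le T\lambda^{\tau(\A)}$. To apply Lemma~\ref{crlm} I must first check that every proper subfamily $\A' \subsetneq \A$ satisfies $c_n(\A') \le T\lambda^{\tau(\A')}$. If $\tau(\A') > n/2$, then $\A'$ is a strictly smaller intersecting family with $\tau \ge n/2$, hence not a counterexample, so (\ref{maine}) gives the required bound. If instead $\tau(\A') \le n/2$, then $T\lambda^{\tau(\A')} \ge T\lambda^{n/2} = 1$, and Corollary~\ref{st} gives $c_n(\A') \le 1$.

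Now take $\F' := \{A_1, A_2\}$ in Lemma~\ref{crlm}. The minimal covers of $\F'$ are the $s$ singletons $\{x\}$ with $x \in A_1 \cap A_2$, together with the $(n-s)^2$ pairs $\{x, y\}$ with $x \in A_1 \setminus A_2$ and $y \in A_2 \setminus A_1$. Writing $t = s/n \in [n^{-1/2}, 1 - n^{-1/2}]$ and $\mu := \lambda^{-1} = 1 + O(n^{-1/2-\varepsilon})$, one computes
$$
c_{\lambda n}(\F') \;=\; \frac{s}{\lambda n} + \frac{(n-s)^2}{\lambda^2 n^2} \;=\; t\mu + (1-t)^2 \mu^2 \;=\; 1 - t(1-t) + O(n^{-1/2-\varepsilon}),
$$
where I used $t + (1-t)^2 = 1 - t(1-t)$ and expanded $\mu$ to first order. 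On the interval $t \in [n^{-1/2}, 1 - n^{-1/2}]$ we have $t(1-t) \ge n^{-1/2}(1 - n^{-1/2})$, which strictly dominates the $O(n^{-1/2-\varepsilon})$ error for $n$ large. Hence $c_{\lambda n}(\F') < 1$, Lemma~\ref{crlm} yields $c_n(\A) \le T\lambda^{\tau(\A)}$, and this contradicts the assumption that $\A$ violates (\ref{maine}).

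The only substantive computation is the bound on $c_{\lambda n}(\F')$; the rest is bookkeeping needed to put (\ref{maine}) in the form required by Lemma~\ref{crlm}. The quantitative heart of the argument is that the gap hypothesis $s \in [k, n-k]$ forces $t(1-t) \gtrsim n^{-1/2}$, and this gain precisely beats the $O(n^{-1/2-\varepsilon})$ loss incurred in shifting the parameter from $n$ down to $\lambda n$; this matching of scales is exactly why the exponent $1/2+\varepsilon$ appears in Theorem~\ref{main} and dictates the value $k = \sqrt{n}$ in the statement of the proposition.
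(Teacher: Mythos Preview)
Your proof is correct and follows essentially the same route as the paper: both apply Lemma~\ref{crlm} to the two-element subfamily $\F' = \{A_1,A_2\}$ after checking that every proper subfamily satisfies the inductive bound. The only cosmetic difference is that the paper shows $c_{n-k/2}(\F') \le 1$ by an exact inequality and then invokes monotonicity (using $(n-k/2)/n \le \lambda$), whereas you compute $c_{\lambda n}(\F')$ directly via a first-order expansion in $\mu - 1$; your treatment of the case split $\tau(\A') \le n/2$ versus $\tau(\A') > n/2$ is in fact more explicit than the paper's.
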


\begin{proof}
Suppose that there are some $A_1, A_2 \in \A$ such that $|A_1 \cap A_2| = x \in [k, n-k]$. Denote $\A' = \{A_1, A_2\}$ and note that
\begin{equation}\label{e2}
    c_{n-k/3}(\A') = \frac{x}{n-k/3} + \frac{(n-x)^2}{(n-k/3)^2} \le 1,
\end{equation}
where the latter inequality holds for every $x \in [k, n-k]$ and any $k \le 0.1 n$. 

Since $\frac{n-k/3}{n} \le \lambda$ for sufficiently large $n$, by Lemma \ref{crlm} applied to $\A'$ we deduce that (\ref{apr}) holds for $\A$ as well. This is however a contradiction to our initial assumption that $\A$ does not satisfy (\ref{maine2}).
\end{proof}

Now we define a relation $\sim$ on $\A$ as follows: two sets $A_1$, $A_2 \in \A$ are equivalent if $|A_1 \cap A_2| \ge n/2$. Then Proposition \ref{pgap} implies that $\sim$ is an equivalence relation on $\A$. Let 
\begin{equation}\label{ecd}
    \A = \K_1 \cup \ldots \cup \K_N
\end{equation}
be the equivalence class decomposition on $\A$ corresponding to $\sim$. This means that for every $i = 1, \ldots, N$ and any $F_1, F_2 \in \K_i$ we have $|F_1 \cap F_2| \ge n-k$ and for any $i \neq j$ and $F_1 \in\K_i$ and $F_2 \in \K_j$ we have $|F_1 \cap F_2| \le k$.

\begin{prop}\label{pr2}
For every $i = 1, \ldots, N$ we have $|\bigcap \K_i| \ge n-5k$.
\end{prop}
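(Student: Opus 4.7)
\textbf{Proof plan for Proposition \ref{pr2}.} The plan is to apply Lemma \ref{ker} to each kernel $\K_i$ and then rule out the first alternative $c_{n-k}(\K_i) \le 1$ using Lemma \ref{crlm} together with the minimality of $\A$ as a counterexample to (\ref{maine}).

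First I would dispose of the trivial case $|\K_i| \le 1$, where $|\bigcap \K_i| \ge n > n - 2k$. Assuming $|\K_i| \ge 2$, I pick any $F_0 \in \K_i$ and any $(n-k)$-element subset $K \subset F_0$. For every $F \in \K_i$ the relation $F \sim F_0$ gives $|F \cap F_0| \ge n-k$, so $|F_0 \setminus F| \le k$, and therefore
$$
|K \cap F| \ge |K| - |F_0 \setminus F| \ge (n-k) - k = n-2k.
$$
Since $k = \sqrt{n} \le n/10$ for large $n$, the hypotheses of Lemma \ref{ker} are satisfied, so either $c_{n-k}(\K_i) \le 1$ or $|\bigcap \K_i| \ge n-2k$.

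To finish, I would derive a contradiction from $c_{n-k}(\K_i) \le 1$. Set $\lambda = e^{-n^{-0.5-\varepsilon}}$ and $T = \lambda^{-n/2}$ as in Proposition \ref{pgap}. Because $\varepsilon > 0$ we have $n^{-0.5-\varepsilon} \le n^{-0.5} = k/n$, hence
$$
\lambda \ge 1 - n^{-0.5-\varepsilon} \ge 1 - k/n,
$$
i.e.\ $\lambda n \ge n-k$. By Observation \ref{mon}, $c_{\lambda n}(\K_i) \le c_{n-k}(\K_i) \le 1$. By the minimality of $\A$ (together with Corollary \ref{st} for subfamilies with $\tau \le n/2$), every proper subfamily $\A'' \subset \A$ satisfies $c_n(\A'') \le T \lambda^{\tau(\A'')}$. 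Applying Lemma \ref{crlm} with $\F = \A$ and $\F' = \K_i$ then yields
$$
c_n(\A) \le T \lambda^{\tau(\A)} = e^{-(\tau(\A) - n/2)\, n^{-0.5-\varepsilon}},
$$
contradicting the assumption that $\A$ violates (\ref{maine}). Hence the second alternative of Lemma \ref{ker} must hold.

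There is no real conceptual obstacle here; the proposition is essentially a packaging of Lemma \ref{ker} in the right setup. The only point that requires care is the quantitative comparison $\lambda n \ge n-k$, which is exactly where the choice $\varepsilon > 0$ in Theorem \ref{main} is being used: this small slack guarantees that the weight bound $c_{n-k}(\K_i) \le 1$ can indeed be fed into the hypothesis $c_{\lambda n}(\F') \le 1$ of Lemma \ref{crlm}.
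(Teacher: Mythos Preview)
Your proof is correct and follows essentially the same approach as the paper: pick any $F\in\K_i$, take an $(n-k)$-subset $K\subset F$, apply Lemma~\ref{ker}, and rule out the alternative $c_{n-k}(\K_i)\le 1$ via Lemma~\ref{crlm} exactly as in Proposition~\ref{pgap}. Your write-up simply makes explicit a few details the paper leaves implicit (the verification that $|F\cap K|\ge n-2k$, the use of Observation~\ref{mon} to pass from $c_{n-k}$ to $c_{\lambda n}$, and the role of Corollary~\ref{st} for proper subfamilies with $\tau\le n/2$).
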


\begin{proof}
Suppose that $|\bigcap \K_i| < n-5k$ for some $i$. Let $F \in \K_i$ be an arbitrary set from $\K_i$ and let $K \subset F$ be any subset of size $(n-k)$. Lemma \ref{ker2} applied to the family $\K_i$ and the set $K$ implies that $c_{n-k}(\K_i) \le 1$. So Lemma \ref{crlm} applied to $\K_i$ implies that $\A$ satisfies (\ref{apr}), a contradiction.
\end{proof}

\begin{prop}
We have $|\A| \le n^{6k}$.
\end{prop}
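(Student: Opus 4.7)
The plan is to write $|\A| = \sum_{i=1}^N |\K_i|$ using the kernel decomposition from Proposition \ref{pr2}, and bound each factor separately.

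For the individual kernel sizes, I would invoke Lemma \ref{kerup}. Since $\A$ is $\tau$-critical (by minimality) and Proposition \ref{pr2} gives $|\bigcap \K_i| \ge n - 2k$, the lemma yields
$$
|\K_i| \le \binom{\tau(\A) + 2k}{2k} \le \binom{n + 2k}{2k} \le \left(\frac{e(n+2k)}{2k}\right)^{2k} \le n^{k(1+o(1))}
$$
for $k = \sqrt{n}$, using only $\tau(\A) \le n$ and Stirling.

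To bound $N$, I pick a representative $F_i \in \K_i$ from each kernel and set $\B = \{F_1, \ldots, F_N\} \subset \A$. The kernel decomposition guarantees $|F_i \cap F_j| \le k$ for $i \neq j$, and $k = \sqrt{n} \le n/(10^4 \ln n)$ for large $n$, so Lemma \ref{small} applies to the pair $(\A, \B)$ and offers two alternatives: either $N = |\B| \le n^{3000}$, which is what we want, or there is a proper subfamily $\A' \subsetneq \A$ with $c_n(\A) \le 2^{\tau(\A') - \tau(\A)} c_n(\A')$. The main obstacle, and the only delicate step, is ruling out this second alternative via the minimality of $\A$ as a counterexample to Theorem \ref{main}. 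I would split into cases: if $\tau(\A') \ge n/2$, then the induction hypothesis gives $c_n(\A') \le e^{-(\tau(\A') - n/2)n^{-0.5-\varepsilon}}$; otherwise, Corollary \ref{st} gives $c_n(\A') \le 1$. In both cases $\tau$-criticality forces $\tau(\A) - \tau(\A') \ge 1$, and since $\ln 2 \gg n^{-0.5-\varepsilon}$ for large $n$, the factor $2^{\tau(\A') - \tau(\A)}$ shrinks faster than the required exponential: a short direct comparison shows that the resulting upper bound on $c_n(\A)$ is at most $e^{-(\tau(\A) - n/2) n^{-0.5-\varepsilon}}$, contradicting the choice of $\A$. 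In the case $\tau(\A') < n/2$ one exploits the stronger inequality $\tau(\A) - \tau(\A') \ge \tau(\A) - n/2$ to cover the extra exponential deficit; this is the one computation that requires explicit care.

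Combining the two bounds then gives $|\A| \le N \cdot \max_i |\K_i| \le n^{3000} \cdot n^{k(1+o(1))} \le n^{3k}$ for all sufficiently large $n$, since $k = \sqrt{n}$ eventually dominates the constant $3000$ in the exponent. Structurally this argument parallels Proposition \ref{pgap}: identify a certifying subfamily of $\A$ with controllable covering structure, feed it into an appropriate lemma (here Lemma \ref{small} rather than Lemma \ref{crlm}), and extract a contradiction with the minimality of $\A$.
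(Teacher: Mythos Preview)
Your proposal is correct and follows essentially the same route as the paper: bound each $|\K_i|$ via Lemma~\ref{kerup}, bound $N$ via Lemma~\ref{small} applied to a set of representatives, rule out the second alternative of Lemma~\ref{small} using the minimality of $\A$, and multiply. Your case split on whether $\tau(\A') \ge n/2$ is a bit more explicit than the paper's one-line ``which immediately implies (\ref{maine})'', but the substance is identical.
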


\begin{proof}
Indeed, by Lemma \ref{kerup}, Proposition \ref{pr2} and $\tau$-criticality of $\A$ we have $|\K_i| \le {\tau(\A)+5k \choose 5k}$ for any $i = 1, \ldots, N$. 

Now let $A_i \in \K_i$ be arbitrary representatives. Note that $|A_i \cap A_j| \le k$ for any $i \neq j$. Obviously $k \ll \frac{n}{\log n}$, so by Lemma \ref{small2} we either have $N \le n^{C'}$ or there is a proper subfamily $\A' \subset \A$ such that 
$$
2^{\tau(\A)}c_n(\A) \le 2^{\tau(\A')} c_n(\A') \le 2^{\tau(\A')} C \lambda^{\tau(\A')},
$$
which immediately implies (\ref{maine}). This implies that we in fact have $N \le n^{C'}$ and so
$$
|\A| \le n^{C'} {\tau(\A)+5k \choose 5k} \le n^{6k},
$$
provided that $n$ is large enough.
\end{proof}

Denote $m = \log n^{6k} = 6k \log n$ and let $l = 10\varepsilon^{-1}$. By Lemma \ref{dec2}, there is a subfamily $\A' \subset \A$ such that $\tau(\A\setminus \A') \le \tau(\A)/2$ such that for every $i = 2, \ldots, l$ we have
\begin{equation}\label{expec}
    \E_{A_1, \ldots, A_i \in \A'} |A_1 \cap \ldots \cap A_i| \le C_l \left( \frac{m}{\tau(\A)} \right)^{i-1}n,
\end{equation}
for some new constant $C_l \ll 2^{l^2}$.
Let $r = n^{-\varepsilon} \frac{\tau(\A)}{m}$ (note that $r \gg 1$ since $\tau(\A) \ge n^{2/3}$ by assumption).

Sample uniformly and independently sets $B_1, \ldots, B_r \in \A'$ and form a random family $\B = \{B_1, \ldots, B_r\}$. Applying (\ref{expec}) to all $l$-element intersections in $\B$ we get
\begin{equation*}
   \E \sum_{S \in {[r] \choose l}} \left|\bigcap_{i \in S} B_i\right| \le C_l {r \choose l} \left( \frac{m}{\tau(\A)}\right)^{l-1}n \le C_l n^{1- \varepsilon l} \frac{\tau(\A)}{m} \le n^{2-\varepsilon l} < 1
\end{equation*}
for sufficiently large $n$.

So there exists an $r$-element family $\B \subset \A'$ such that all $l$-wise intersections of sets from $\B$ are empty. By Corollary \ref{corbd}, for $h = \frac{r}{20l^3}$ we have $c_{n-h}(\B) \le 1$. But 
$$
\frac{n-h}{n} \le 1 - \frac{\tau(\A)}{20l^3 m n^{1+\varepsilon}} \le 1 -\frac{\tau(\A)}{k n} \le \lambda,
$$
by the choice of $k = \sqrt{\tau(\A)}$ and $\lambda = e^{-(1.5-\varepsilon) \frac{\tau(\A)^{0.5-\varepsilon}}{n}}$ and sufficiently large $n$.
So by Lemma \ref{crlm} applied to $\F' = \B$ we have (\ref{maine2}). Theorem \ref{main} is proved. 

\section{Remarks}\label{sec4}

Let us describe a construction of a maximal intersecting family which generalizes examples from \cite{EL} and \cite{FOT}. Let $G$ be a tournament on the vertex set $\{1, \ldots, m\}$ and let $K_1, \ldots, K_m$ be a sequence of disjoint non-empty sets. Let $\K_i$ be the family of all sets $F$ such that $K_i \subset F$ and for $i \neq j$:
\begin{align*}
    |F \cap K_j| = \begin{cases}
        1,\text{ if $(i, j) \in G$,}\\
        0, \text{ if $(i, j) \not \in G$.}
    \end{cases}
\end{align*}
It is clear from this definition that the family $\F = \K_1 \cup \ldots \cup \K_m$ is intersecting. Let $d_i$ be the outdegree of the vertex $i$ and let $n > \max d_i$. If we let $|K_i| = n-d_i$ then the family $\F$ is $n$-uniform and intersecting. 

It is not difficult to characterize all minimal coverings of $\F$. First, observe that if $T$ is a minimal covering of $\F$ then $|T \cap K_i| \in \{0, 1, |K_i|\}$. Then for every minimal covering $T$ we can define two sets $A, B \subset [m]$, namely, $A$ is the set of all $i$ such that $|C \cap K_i| = 1$ and $B$ is the set of all $i$ such that $K_i \subset T$. Now the fact that $T$ is a covering is equivalent to the assertion that $A \cup B \cup N_{in}(B) = [m]$, where $N_{in}(B)$ denotes the set of all vertices of $G$ from which there is an edge to $B$.

\paragraph{Example 1.} If we let $G$ to be the linearly ordered complete directed graph then $\F$ coincides with the family constructed by Erd{\H o}s--Lov{\' a}sz \cite{EL}. In this case $\tau(\F) = n$ and $|\F|$ is approximately $n!$.

\paragraph{Example 2.} Let $G$ be graph on the vertex set $\Z_{2t-1} \cup \{v\}$, where the vertices $i, j \in \Z_{2t-1}$ from the cyclic group are connected if $j-i \in [1, t-1]$ and the vertex $v$ has outdegree $2t-1$. In this case we have $n = 2t$, $\tau(\F) = n$ and $|\F|$ is approximately $\left(\frac{n}{2} \right)^{n}$. Note that the main contribution to the size of $\F$ comes from the family $\K_v$ corresponding to the vertex $v$.\footnote{The construction of a maximal intersecting family of size $(n/2)^n$ for odd $n$ is a bit more delicate, see \cite{FOT}.}

It is not hard to see that the construction in the second example gives the maximum size of $\F$ among all constructions of this type.

The construction above and the decomposition (\ref{ecd}) which we used in the proof of Theorem \ref{el} suggest to consider the following special class of families. 
Let $K_1, \ldots, K_n$ be disjoint sets such that $|K_i| = n-a_i$. Let $\K_i$ be an $n$-uniform family of sets containing $K_i$. Let $\F = \K_1 \cup \ldots \cup \K_n$ and suppose that $\tau(\F) = n$ and $\F$ is intersecting.

\begin{conj}[\cite{K}]\label{kconj}
In the situation described above we have $\sum_{i = 1}^n a_i \ge {n \choose 2}$. Moreover, the condition that $|K_i| = n-a_i$ can be replaced by the condition that $\K_i$ is $(|K_i|+a_i)$-uniform.
\end{conj}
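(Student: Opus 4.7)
The plan is to lower-bound $\sum_i a_i$ via an auxiliary intersection tournament on $[n]$, and to close any remaining deficit by a matching argument on shared extras. Define a digraph $G$ on $[n]$ by placing an arc $i \to j$ exactly when every $F \in \K_i$ meets $K_j$, and write $J_i = N_G^+(i)$. The first observation is that $a_i \geq |J_i|$ for every $i$: fix any $F \in \K_i$; then $F \setminus K_i$ has exactly $a_i$ elements, and for each $j \in J_i$ the set $F$ must contain at least one element of the pairwise disjoint $K_j$. Since the $K_j$'s are disjoint from one another and from $K_i$, these extras are all distinct, and summing over $i$ gives $\sum_i a_i \geq |E(G)|$.

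If every unordered pair $\{i,j\}$ carries at least one arc of $G$, then $|E(G)| \geq \binom{n}{2}$ and we are done. Otherwise let $P_0$ be the set of pairs with no arc in either direction. For each $\{i,j\} \in P_0$, the definition of $G$ supplies representatives $F_{ij} \in \K_i$ and $F_{ji} \in \K_j$ with $F_{ij} \cap K_j = \emptyset$ and $F_{ji} \cap K_i = \emptyset$; the intersecting property of $\F$ then forces $F_{ij} \cap F_{ji} \neq \emptyset$, and this intersection lies in $X \setminus (K_i \cup K_j)$. The goal is to show $\sum_i a_i \geq |E(G)| + |P_0|$, which combined with the trivial $|E(G)| \geq \binom{n}{2} - |P_0|$ yields the conjecture.

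To achieve the $+|P_0|$ gain, I would charge one fresh unit of extras to either $a_i$ or $a_j$ for each $\{i,j\} \in P_0$, using the witness intersection produced above. If the shared extra sits in $X_0 := X \setminus \bigcup_l K_l$, or in some $K_l$ with $l \notin J_i$, then it is a new extra of $F_{ij}$ not yet counted in the $|J_i|$ allocation from Step 1, so the charge is immediate. The degenerate case is when every available witness pair has its shared extra inside $K_l$ for some $l \in J_i \cap J_j$, since that extra has already been paid for by the tournament count. To eliminate this case I would select the witnesses $F_{ij}$ adaptively and seek a Hall-type matching between $P_0$-pairs and "free" extras sitting outside $\bigcup_{l \in J_i} K_l$.

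\textbf{The main obstacle} is precisely the verification of this matching. Either Hall's condition holds and the charging goes through, or the deficient set produces a small "clump" $S \subset X$ on which every $P_0$-witness concentrates; in the latter case, combining $S$ with a transversal through the $K_l$'s of large out-degree in $G$ should yield a cover of $\F$ of size strictly less than $n$, contradicting $\tau(\F) = n$. A secondary route worth exploring is induction on $n$: identify a vertex $i^*$ of large out-degree in $G$ (so that $a_{i^*}$ is close to $n-1$), delete $K_{i^*}$, and verify that the residual configuration on $[n]\setminus\{i^*\}$ still satisfies the hypotheses with parameter $n-1$; the inductive bound then accounts for the remaining $\binom{n-1}{2}$ units. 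The technical core of both routes is the same — a $\tau$-criticality analysis for the kernel decomposition of $\F$, to be developed as a standalone lemma.
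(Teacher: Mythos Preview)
The statement you are attempting to prove is labelled \emph{Conjecture} in the paper, not a theorem, and the paper contains no proof of it. Immediately after stating it, the author writes that ``we were only able to prove the lower bound $\sum_{i=1}^n a_i \gg n^{3/2}$ but any improvement seems to require new ideas.'' So there is nothing to compare your proposal against: the target is genuinely open, and the paper's own result falls short of $\binom{n}{2}$ by a factor of roughly $\sqrt{n}$.

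As for your approach itself, the first two steps are sound: the inequality $a_i \ge |J_i|$ is correct, and if the tournament $G$ has an arc on every pair you are indeed finished. The trouble is exactly where you locate it, and it is more serious than a Hall verification. In the ``degenerate'' situation you single out --- every witness intersection $F_{ij}\cap F_{ji}$ landing inside some $K_l$ with $l\in J_i\cap J_j$ --- there may simply be \emph{no} free extras to match against. Concretely, if $a_i=|J_i|$ for every $i$ (so each $F\in\K_i$ is exactly $K_i$ together with one transversal point from each $K_l$, $l\in J_i$), then every extra of every set is already accounted for by the tournament count, and your $+|P_0|$ charge has nowhere to go. This tight case is precisely the construction the paper describes as showing the conjecture would be best possible; ruling it out when $P_0\neq\emptyset$ and $\tau(\F)=n$ is essentially equivalent to the conjecture itself in that regime, not a technical lemma one can hope to extract from Hall's theorem. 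Your fallback --- that a Hall deficiency would yield a small cover contradicting $\tau(\F)=n$ --- is stated as a hope rather than an argument, and the inductive route has the same circularity: deleting a high-out-degree vertex need not preserve $\tau=n-1$ for the residual family. In short, the proposal isolates the right combinatorial tension but does not supply the missing idea the paper says is needed.
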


Note that, if true, Conjecture \ref{kconj} is best possible: we can take $G$ to be a graph whose outdegrees are precisely $a_1, \ldots, a_n$ and then use the construction of an $n$-uniform family $\F$ described above. One can easily produce sequences of degrees $a_1, \ldots, a_n$ such that the corresponding graph exists and $\tau(\F) = n$.

Note that if there is a counterexample to Conjecture 1
such that, say, $a_i \sim n^{1-\varepsilon}$ for some $\varepsilon > 0$ and every $i = 1, \ldots, n$, then one can construct a very large maximal intersecting family as follows. 

Let $\F_0 = \K_1 \cup \ldots \cup \K_n$ be an $n$-uniform family such that $\tau(\F_0) = n$. Then any set $F$ such that $|F \cap K_i| = 1$, for every $i$, is a minimal covering of $\F_0$. Denote the family of all such sets by $\F'_1$. We have
$$
c_n(\F_0) \ge n^{-n}|\F'_1| = n^{-n} \prod_{i = 1}^n (n - a_i) \sim (1 - n^{-\varepsilon})^n \sim e^{- n^{1-\varepsilon}}.
$$
Moreover, if we let $\F_1$ be an $(n+1)$-uniform family of sets $F \cup \{x_0\}$, where $F \in \F'_1$ and $x_0$ is a ``new" element of the ground set, then $\F = \F_0 \cup \F_1$ is an intersecting family of $n$ and $n+1$ element sets such that $\tau(\F) = n$ and each member of $\F$ is a minimal covering of $\F$. The family $\F$ has size at least $e^{-n^{1-\varepsilon}} n^n$ and so it essentially contradicts the conjecture of Frankl--Ota--Tokushige \cite{FOT}.

In the setting of Conjecture \ref{kconj} we were only able to prove the lower bound $\sum_{i=1}^n a_i \gg n^{3/2}$ but any improvement seems to require new ideas.

\paragraph{Acknowledgements.} I thank Andrey Kupavskii for valuable discussions and for telling me about Conjecture \ref{kconj}. I thank P\'eter Frankl, Stijn Cambie and the anonymous referee for useful comments on an earlier version of this paper.

\end{document}